\title{Site Frequency Spectrum in stationary branching populations}
\renewcommand{\Pr}{\mathbb{P}}
\newcommand{\Es}{\mathbb{E}}
\newcommand{\cov}{\mathrm{Cov}}
\newcommand{\diff}{\mathop{}\!\mathrm{d}}
\newcommand{\zm}{\zeta^\star}
\newcommand{\ind}{\mathbf{1}}
\newcommand{\Ed}{E_\mathrm{d}}
\newcommand{\Eg}{E_\mathrm{g}}
\newcommand{\ca}{{\mathcal A}}
\newcommand{\cf}{{\mathcal F}}
\newcommand{\cy}{{\mathcal Y}}
\newcommand{\cz}{{\mathcal Z}}
\newcommand{\cll}{{\mathcal L}}
\newcommand{\cn}{{\mathcal N}}
\newcommand{\cm}{{\mathcal M}}
\newcommand{\ct}{{\mathcal T}}
\newcommand{\cti}{{\ct^\mathrm{st}}}
\newcommand{\ctk}{{\ct^\mathrm{Kesten}}}
\newcommand{\ctcl}{{\ct^\mathrm{st}_\mathrm{clonal}}}
\newcommand{\Pt}{{\Phi}}
\newcommand{\cu}{{\mathcal U}}
\newcommand{\cx}{{\mathcal X}}
\newcommand{\E}{{\mathbb E}}
\newcommand{\N}{{\mathbb N}}
\renewcommand{\P}{{\mathbb P}}
\newcommand{\Q}{{\mathbb Q}}
\newcommand{\R}{{\mathbb R}}
\newcommand{\T}{{\mathbb T}}
\newcommand{\rN}{{\mathbf{\mathrm N}}}
\newcommand{\rd}{{\rm d}}
\newcommand{\bt}{{\mathbf t}}
\newcommand{\bZ}{{\mathbf Z}}
\newcommand{\bY}{{\mathbf Y}}
\newcommand{\fT}{\mathfrak{T}}
\renewcommand{\root}{{\varrho}}
\newcommand{\length}{{\mathscr L}}
\newcommand{\zc}{Z_\mathrm{cl}}
\newcommand{\Card}{{\mathrm {Card}}\;}
\newcommand{\inv}[1]{\mathop{\frac{1}{ #1}}\nolimits}
\newcommand{\expp}[1]{\mathop {\mathrm{e}^{ #1}}}
\theoremstyle{plain}
\newtheorem{theo}{Theorem}[section]
\newtheorem*{theo*}{Theorem}
\newtheorem*{cor*}{Corollary}
\newtheorem{lem}[theo]{Lemma}
\newtheorem{lemma}[theo]{Lemma}
\newtheorem{theorem}[theo]{Theorem}
\theoremstyle{remark}
\newtheorem{rem}[theo]{Remark}
\begin{document}

\author{Romain Abraham}
\address{Romain Abraham, Institut Denis Poisson,
Universit\'{e} d'Orl\'{e}ans,
Universit\'e de Tours, CNRS, France}
\email{romain.abraham@univ-orleans.fr}

\author{Jean-François Delmas}
\address{Jean-Fran\c{c}ois Delmas,  CERMICS, ENPC, Institut Polytechnique de Paris, CNRS, Marne-la-vall\'ee, France}
\email{jean-francois.delmas@enpc.fr}

\author{Patrick Hoscheit}
\address{Patrick Hoscheit, INRAE, MaIAGE, Université Paris-Saclay, 78350 Jouy-en-Josas, France}
\email{patrick.hoscheit@inrae.fr}

\date{\today}

\begin{abstract}
 This paper explores the Site Frequency Spectrum (SFS) in stationary 
 branching populations. We derive 
 estimates for the SFS associated with a sample from a continuous-state 
 branching process conditioned to never go extinct, utilizing a quadratic 
 branching mechanism. The genealogy of such processes is represented by a 
 real tree with a semi-infinite branch, and we compute the expectation 
 of the SFS under the infinitely-many-sites assumption as the 
 sample size approaches infinity. Additionally, we present a continuum 
 version of the SFS as a random point measure on the positive real line and 
 compute the density of its expected measure explicitly. Finally, we derive estimates 
 for the size of the clonal subpopulation carrying the same genotype as the most 
 recent common ancestor of the whole population at a given time.
\end{abstract}

\maketitle

\section{Introduction}

The Site Frequency Spectrum (SFS) of a genetic sample is a summary statistic of 
the full alignment that characterizes each mutation found in the sample by the 
number of individuals carrying it. It has been shown to reflect many features of
the past dynamics of the population from which the sample was taken, including 
variations in ancestral population size, selection, or the existence of 
population structure. In this paper, we will give estimates for the expectation
of the SFS associated to a sample from a continuous-state branching process 
conditioned never to go extinct. Such processes, first described in 
\cite{Lambert2007}, represent, at any given time, the size of an infinite stationary 
population undergoing branching. In the general case, the branching mechanism
is described by the Laplace exponent of a spectrally positive Lévy process. 
Here, we will focus on the quadratic case in which the underlying Lévy process is 
a Brownian motion with positive drift. The genealogy of such stationary branching 
processes can be represented by a metric space \((\ct,d)\) which is a 
\emph{real tree} with an infinite branch (see \cite{Evans} for a comprehensive introduction 
to real trees and their use in probability theory). We will use the distribution of 
the subtree spanned by a uniform sample of \(n\) leaves at a given time, which 
was given in \cite{abrahamExactSimulationGenealogical2021}, to compute the 
expectation of the SFS of 
such a sample (Theorem \ref{the:SFS_discret}), under the infinitely-many-sites 
assumption, as \(n\) goes to 
\(\infty\). We also present a continuum version of the SFS as a random 
point measure on \(\mathbb{R}_+\), following the framework introduced in 
\cite{Duchamps2018}. We show that the expected measure has a density with 
respect to Lebesgue measure, which we compute explicitly (Theorem \ref{the:meanSFS}). 
Finally, we study the part of the population at a given time that carries no  
additional mutations compared to its most recent common ancestor. We compute the
expectation of the size of this subpopulation, as well as the ratio between its size 
and the size of the whole extant population (Theorem \ref{th:moment-clone}).

We will now review some results from the literature on the SFS. 
Given a rooted real tree \(\ct\) with \(n\) leaves, let 
\(\mathcal{M}\) be an independent Poisson point process on \(\ct\) with 
intensity \(\mu>0\). Each atom of \(\mathcal{M}\) is a mutation that is carried 
by the whole subpopulation descended from it. Each mutation occurs at a 
different locus (\emph{i.e.} we assume the \emph{infinitely-many-sites model}), 
and we assume that the ancestral allele at each locus is known. Thus we can 
define the SFS of a $n$-sample as the vector:
\[
 \xi^{(n)}=(\xi_1^{(n)},\dots,\xi_{n-1}^{(n)}),
\] 
where \(\xi_k^{(n)}\) is the number of mutations carried by exactly \(k\) 
individuals in the sample.

When \(\ct\) is a Kingman coalescent tree with \(n\) leaves, the first 
moment of the SFS can be explicitly computed:
\begin{equation}\label{eq:SFSKingman}
 \Es[\xi_k^{(n)}] = \frac{\theta}{k},\quad k=1,\dots,n-1,
\end{equation}
where \(\theta\) is the population-scaled mutation rate \(\theta=4N_e\mu\). In 
this expression, \(N_e\) is the effective population size parameter and \(\mu\)
as above, the per-lineage mutation rate. See \cite{Fu1995} for a derivation of 
this expression, as well as results on second moments. This result was 
subsequently extended to accommodate relaxations of the strict assumptions 
underlying the Kingman coalescent. Notably, Griffiths and Tavaré \cite{Griffiths1998}
established the following formula for the expectation of the SFS:
\begin{equation}\label{eq:SFSSomme}
 \Es[\xi_k^{(n)}] = \frac{\theta}{2} \sum_{i=2}^{n-k+1} i p^{(n)}(i,k) 
 \Es[T^{(n)}_i],
\end{equation}
where \(p^{(n)}(i,k)\) is the probability that at the time the coalescent 
has \(i\) blocks, a given one of them contains exactly \(k\) leaves, and where
\(T^{(n)}_i\) is the amount of time when the coalescent has 
exactly $i$ blocks. This formula holds for variable population sizes, but the 
expectations might not be explicitly computable.

Equation \eqref{eq:SFSSomme} can be generalized to the case of 
\(\Lambda\)-coalescents \cite{Birkner2013}, or even \(\Xi\)-coalescents 
\cite{Blath2016}. Asymptotic results for \(\Lambda\)-coalescents in the case
where \(\Lambda\) is regularly varying at 0 with index \(1<\alpha<2\) (meaning 
that \(\Lambda(\diff x)=f(x)\diff x\) with \(f(x)\sim Ax^{1-\alpha}\) as \(x\to 0\)) are 
found in \cite{Berestycki2014c}: 
\[
 \lim_{n\to \infty}n^{\alpha-2}\xi_k^{(n)} = \frac{\theta}{2} C_{A,\alpha} 
 \frac{(2-\alpha)\Gamma(k+\alpha-2)}{k! \Gamma(\alpha-1)}, 
\]
almost surely for fixed \(k\ge 1\), where the constant \(C_{A,\alpha}\) is 
explicit. Recently, Kersting et al. \cite{Kersting2019} were able to obtain a 
closed integral formula for the SFS in the special case of the 
Bolthausen-Sznitman coalescent:
\[
 \Es[\xi_k^{(n)}] = \theta n \int_0^1 \frac{\Gamma(k-p)}{\Gamma(k+1)} 
 \frac{\Gamma(n-k+p)}{\Gamma(n-k+1)} \frac{dp}{\Gamma(1-p)\Gamma(1+p)},
\]
which leads to the following asymptotics for large values of \(n\):
\begin{equation}\label{eq:SFSBS}
 \Es[\xi_k^{(n)}] \sim \begin{cases} 
 \frac{\theta n}{\log n}& \text{ if }k=1,\\
 \frac{\theta n}{k(k-1)} \frac{1}{\log^2(n/k)}&\text{ if }
 k\ge 2,\ k/n\to 0, \\
 \frac{\theta}{n} f_1(u)
 &\text{ if }k/n\to u\in (0,1),\\
 \frac{\theta}{n-k}\frac{1}{\log(n/(n-k))}&\text{ if }1-k/n\to 0,
 \end{cases}
\end{equation}
where \(f_1(u)=\int_0^1 u^{-1-p}(1-u)^{p-1}\sin(\pi p)/(\pi p)dp\) is the 
asymptotic profile of the SFS. 

Another vein of research has focused on the use of the SFS to 
infer parameters of the coalescent, such as the \(\Lambda\) or \(\Xi\) measure
for exchangeable coalescents or ancestral demographic fluctuations when 
effective population size is not assumed to be constant through time. Starting
with \cite{Myers2008}, several negative and positive identifiability results 
have been proven, see \cite{Bhaskar2014a,Kim2015,Koskela2013,Terhorst2015,miropinaEstimatingLambdaMeasure2023},
that put 
the theory of ancestral demographic reconstruction on solid statistical footing. 
This has also led to numerical methods to efficiently compute the SFS under a given 
coalescent model and with a given demography \cite{Spence2016} and to use them for 
hypothesis testing \cite{Birkner2014,Koskela2017} or parameter inference 
\cite{Koskela2015,Demography2018}. 

The study of site frequency spectra in branching processes, which is the 
subject of the present paper, is facilitated by the description of the 
genealogy of extant populations using \emph{coalescent point processes} (CPP), 
starting with \cite{Popovic2004}. CPPs are random genealogies defined using 
sequences of random variables \((H_i,\ i\ge 1)\) representing the time to the most recent common ancestor (TMRCA) of consecutive individuals:
\[
 \mathrm{TMRCA}(i,i+1) = H_{i+1},\quad i\ge 1.
\]
Lambert \cite{Lambert2009} proved that for an
independent CPP, under mild moment conditions and assuming uniform mutations 
along lineages with rate \(\theta\), the following holds for fixed $k\ge 1$:
\[
 \lim_{n\to \infty} \frac{\xi_n^{(k)}}{n} = \theta \int_0^\infty 
 \frac{1}{W(x)^2}\left(1-\frac{1}{W(x)}\right)^{k-1}dx \quad \text{a.s.,}
\]
where \(W(x)=1/\Pr(H>x)\) is the \emph{scale function} of the random variables
underlying the CPP. This result was later extended to more general mutation 
distributions in \cite{Duchamps2018}. 

Most recently, Schweinsberg and Shuai 
\cite{schweinsbergAsymptoticsSiteFrequency2023} have examined 
site frequency spectra for critical or supercritical birth and death processes, 
using CPP representations of the genealogy of \(n\) sampled individuals 
at a given time \(T_n\to \infty\), due to \cite{Lambert2018a}. In the critical 
case, they found Kingman-like asymptotics for the total length 
of branches with exactly \(k\) sampled leaves in their descendance, and proved
asymptotic normality.

In this work, we will study the SFS associated to a neutral, time-homogeneous
mutation process at rate \(\mu\) in populations modelled by a stationary continuous-state branching process $(Z_t, t\in \R)$, for quadratic 
branching mechanisms given by:
\[
 \psi(u)=\beta u^2 + 2\beta\theta u,
\]
where $\beta>0$ is a time scaling parameter and $1/\theta=\Es[Z_t]$ for any $t\in \R$ can be seen as a population size scaling parameter. In such a population, sampling \(n\) individuals at time 0, representing the present, our first result (Theorem \ref{the:ExpectedSFS}) gives an exact formula for the expected SFS.

\begin{theo*}[Expected site frequency spectrum]
For $1\leq k\leq n-3$ we have:
\begin{multline}
    \beta  \theta \E[\xi^{(n)}_k]= \frac{1}{2k} \left( 1 + \frac{n}{n-k-2}\right)
    + \frac{n}{(n-k)(n-k-2)} \\
    + \frac{n(n-1)}{(n-k)(n-k-1)(n-k-2)}\, \left( \Psi(k) - \Psi(n-1)\right),
\end{multline}
where $\Psi(u)=\Gamma'(u)/\Gamma(u)$ is the digamma function, and for $k=n-2, n-1$:
\[
\beta  \theta   \E[\xi^{(n)}_{k_n}]=\frac{2}{3n}+ O\left(\frac{1}{n^2}\right).
\]
\end{theo*}
It is not difficult to derive from this result that for $k_n/n \rightarrow u\in [0,1]$, with $1\leq k_n \leq n-1$:
\begin{equation}\label{eq:AsympProfile}
\lim_{n\rightarrow \infty} \beta  \theta  k_n \E[\xi^{(n)}_{k_n}]=\mathbf{g}(u)
\quad\text{with}\quad
\mathbf{g}(u)=
\frac{1}{2} \left(1+ \frac{1}{1-u} \right)
+ \frac{u}{(1-u)^2}+
\frac{u\log(u)}{(1-u)^3} \cdot
\end{equation}
The function $\mathbf{g}$ is the asymptotic profile of the SFS, and should be compared to similar functions in the Kingman case (for which $\mathbf{g}(u)=1$) and the Bolthausen-Sznitman case (see Equation \eqref{eq:SFSBS}), where it exhibits its distinctive U-shape. It is plotted in Fig. \ref{fig:expected_sfs}. It is elementary to check that:
\begin{equation}
    \label{eq:def-bf:g}
\mathbf{g}(u)=
\begin{cases}
    1+ u \log(u) + O(u) & \text{as $u\rightarrow 0+$,}\\
    \frac{2}{3} + O(1-u) &
\text{as $u\rightarrow 1-$.}
\end{cases}
\end{equation}

\begin{figure}
    \centering
    \includegraphics[width=.7\textwidth]{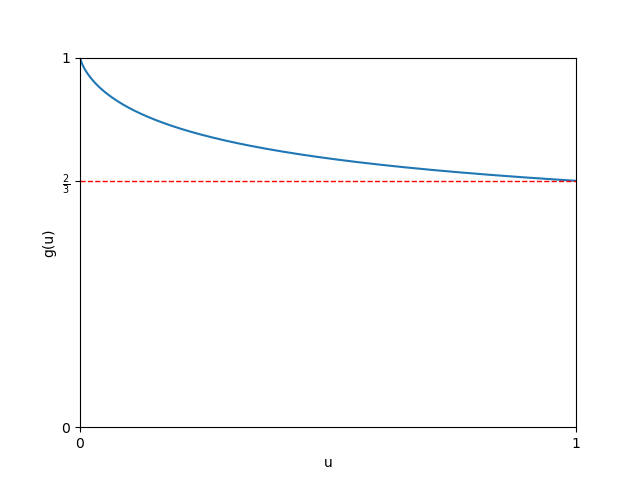}
    \caption{Asymptotic profile of the expected site frequency spectrum $\mathbf{g}(u)$ for $0\le u \le 1$.}
    \label{fig:expected_sfs}
\end{figure}

We can also give a more precise result (Theorem \ref{the:SFS_discret}), describing the conditional expectation of the 
SFS given the population size $Z_0$.
\begin{theo*}[Conditional expectation of the SFS]
For $1\le k\le n-1$, we have:
\begin{equation} \label{eq:SFSdiscrete}
   \frac{\beta}{\mu Z_0}\,    \E[\xi_k^{(n)}  |\, Z_0] = \inv{k} +
   \inv{k} g_1\left(\theta Z_0, 
      \frac{k}{n}\right)+  \frac{\sqrt{k} }{n^2}\, g_2\left(\theta Z_0,
      \frac{k}{n}, n\right),
\end{equation}
where the function $g_1$ is explicit (see Equation~\eqref{def:g1}) and represented in Fig.\ref{fig:g1_plot}, and where $g_2$ is uniformly bounded (thus the third term of the sum vanishes in the limit $n\to \infty$). 
\end{theo*}

The expansion~\eqref{eq:DL-gn} gives that for $\lim_{n\rightarrow \infty} k_n/n=u $ with $u$ close to $0$:
\[
 \lim_{n\rightarrow \infty}  k_n \, \E[\xi_{k_n}^{(n)}  |\, Z_0] \simeq 
\frac{\mu Z_0}{\beta}
\big(1+ 2(\theta Z_0-1) u\log(u)\big),
  \]
and the expectation w.r.t.\ $Z_0$ of the above right hand-side is exactly $\mu/(\beta \theta)$ times $1+ u\, \log(u)$, which is the expansion of $\mathbf{g}$ around $0$. Hence, we can interpret the value $\mathbf{g}(0)=1$ as the population size $\Es[Z_0]$ at time 0, rescaled by its expectation $1/\theta$. Going further, the value $\mathbf{g}(1)=2/3$ is reminiscent of the relative expected population size $\Es[Z_A]/\Es[Z_0]$ at the time $A$ of the most recent common ancestor that was computed in \cite{chenSmallerPopulationSize2012}. Thus, it would be interesting to compare $\mathbf{g}(u)$ with the relative expected population  size at time $uA$, that is, $\Es[Z_{uA}]/\Es[Z_{0}]$ for $ 0\le u\le 1$. 

The proof of Theorem \ref{the:SFS_discret} relies mostly on a representation theorem of the genealogy of a sample of 
\(n\) individuals using a construction similar to the CPP, obtained in 
\cite{abrahamExactSimulationGenealogical2021}. This construction is similar to the coalescent point processes (CPPs) 
extensively studied by Lambert since their introduction in 
\cite{Lambert2009}. Most notably, in \cite{Duchamps2018}, the authors 
consider general CPPs associated to Poisson processes and compute asymptotic features of the SFS for these trees. The stationary setting 
used in the present work breaks the Poissonian structure of the coalescent point process
and leads to the more involved construction of 
\cite{abrahamExactSimulationGenealogical2021}. Note also that in that paper, Abraham and Delmas studied the 
asymptotics of the total length $\Lambda_n$ of the genealogy $\ct_n$ of a $n$-sample of individuals sampled uniformly in the extant population at time 0, proving in particular (Theorem 
5.1 in \cite{abrahamExactSimulationGenealogical2021}) that:
\begin{equation*}
    \Es[\Lambda_n|Z_0] = \frac{Z_0}{\beta} \log\left( \frac{n}{2\theta Z_0} \right) + O(n^{-1}\log(n)).
\end{equation*}
In our context, $\Lambda_n$ is related to the number of segregating sites $S_n$ defined by $S_n = \sum_{k=1}^{n-1}
\xi_k^{(n)}$, since, conditionally on $\ct_n$, $S_n$ is a Poisson random variable with mean $\mu\Lambda_n$. In this sense, Theorem \ref{the:SFS_discret} can be seen as an 
unfolding of the asymptotics of $S_n$ given in \cite{abrahamExactSimulationGenealogical2021}. Indeed,  we 
recover that $\E[\Lambda_n|Z_0]\simeq (Z_0/\beta) \log(n)$ by summing \eqref{eq:SFSdiscrete} over $1\le k \le n-1$.

We will also present a version of the SFS defined directly on the continuum stationary tree $\cti$, in the spirit of the 
construction of \cite{Duchamps2018}. In Theorem \ref{the:meanSFS}, we compute the expected intensity of the
continuum SFS measure, defined by $\Pt = \sum_{i\in I_M} \delta_{\cz_0(\cti(m_i))}$, where $\cz_0(\cti(m_i))$ is the size at time 0 of the population carrying a given mutation $m_i$. Let $\Gamma(a,b)$ be the incomplete Gamma function.

\begin{theo*}[Expected density of the continuous SFS measure]
The mean site frequency measure \(\Es[\Phi(\rd r)]\) of the stationary tree
  $\cti $ is absolutely continuous with respect to the Lebesgue measure on \(\mathbb{R}_+\), with
  density given by:
 \begin{equation}
 f(r) = \frac{\mu}{\beta} \left(\frac{\expp{-2\theta r}}{\theta r} + 
 \expp{-2\theta r}+2\theta r \, \Gamma(0,2\theta r)\right). 
 \end{equation}
\end{theo*}

We also give results (Theorem~\ref{th:moment-clone}) about the fraction of the population carrying 
the same alleles as its most recent common ancestor, called the \emph{clonal subpopulation}.

\begin{theo*}[Absolute and relative size of the clonal subpopulation]
If $Z_\mathrm{cl}$ is the absolute size of the clonal subpopulation, and $R=Z_\mathrm{cl}/Z_0$ its relative size, then we have: 
\begin{equation}
 \E[R]=\frac{2}{(\alpha+1)(\alpha+2)}
 \quad\text{and}\quad
 \E[\zc]=\E[R Z_0]= \frac{3}{\alpha+3}\, \E[R] \,\E[Z_0]
,
\end{equation}
where $\alpha=\mu/(2\beta\theta)$, and thus:
\[
    \cov(R,Z_0) = - \frac{\alpha}{\alpha+3} \,\E[R]\,  \E[Z_0] <0.
\]
\end{theo*}

An interesting extension of the present work would be to generalize Theorem 
\ref{the:SFS_discret} to branching mechanisms containing an infinite jump 
measure, such as stable branching mechanisms \(\psi(u)=u^\alpha,\ 1<\alpha<2\). 
This would require an ancestral construction similar to the CPP, but allowing 
for multiple branches coalescing at the same time. We expect that the SFS of 
such populations would not have a Kingman-like form, but possibly exhibit a 
U-shape, typical of genealogies described by \(\Lambda\)-coalescents 
\cite{freundInterpretingPervasiveObservation2023}. 

The rest of the paper is organized as follows: in Sections \ref{sec:Prelim} and \ref{sec:sample-T},
below, we will introduce the objects and notations used in the paper, in 
particular the constructions of \cite{abrahamExactSimulationGenealogical2021}.
In Section \ref{sec:DiscreteSFS}, we will give the proof of Theorem 
\ref{the:SFS_discret}, then, in Section \ref{sec:ContinuousSFS}, we will present
the continuum version of the SFS and compute the density of its expected 
measure. Finally, in Section \ref{sec:PopClonale}, we prove the results concerning the continuum SFS and the clonal subpopulation.

\section{Preliminaries}\label{sec:Prelim}

\subsection{Stationary continuous branching processes}

We consider a critical quadratic branching mechanism \(\psi(u)=\beta u^2\) and 
the associated family \((\psi_\theta,\ \theta>0)\) of sub-critical branching 
mechanisms:
\begin{equation}\label{eq:BranchingMechanism}
 \psi_\theta(u)=\psi(u+\theta)-\psi(\theta)=\beta u^2 + 2\beta\theta u. 
\end{equation}
Let $\theta>0$ be fixed. 
We denote by \(\Pr_x\) the distribution of a continuous-state branching (CB)
process \(\bY=(Y_t,\ t\ge 0)\) started at \(x >0\), with branching mechanism 
\(\psi_\theta\). The process \(\bY\) is the solution of the following Feller diffusion 
equation where $(B_t, \, t\geq 0)$ is a standard Brownian lmotion:
\[
  \diff Y_t = \sqrt{2\beta Y_t}\diff B_t - 2\beta\theta Y_t \diff t
  \quad\text{and}\quad Y_0=x.
\]
We also consider the associated canonical measure \(\rN\), 
which is a \(\sigma\)-finite measure on the space \(\mathbb{D}\) of nonnegative 
continuous functions \(f\) such that if \(f(s)=0\) for some \(s>0\), then 
\(f(t)=0\) for all \(t\ge s\). It can be thought of as the ``distribution'' of a continuous-state branching process
started at an infinitesimal population size conditioned not to die immediately, in the same way that the Itô excursion measure of Brownian motion 
represents the ``distribution'' of a single Brownian excursion outside $0$ (see \cite{LeGall1999} for more details on the 
relation between the excursion measure and the canonical measure of branching processes). By convention, we 
will write $\rN[X]=\int X(f)\, \rN(\rd f)$ for the integral with respect to $\rN$ of integrable functions $X:\mathbb{D}\to \R$. 
Of course, the function $X$ can also seen as a random variable under $\P_x(\rd f)$. 

The 
Laplace transform of the one-dimensional distributions of \(\bY\) is given by, for $\lambda\ge 0$ and $t\ge 0$:
\[
 \Es_x [\exp(-\lambda Y_t)] = \exp(-xu(t,\lambda)),
\]
where the positive function $u$, which solves the equation $u(t,\lambda)+ \int_0^t \psi_\theta(u(s, \lambda))\, \rd s=\lambda$, see \cite[Eq.(3.3)]{Li2011}, is given by: 
\[
 u(t,\lambda) = \rN[1-\exp(-\lambda Y_t)] = 
 \frac{2\theta \lambda}{(2\theta+\lambda)\exp(2\beta\theta t) - \lambda}\cdot
\]
In particular, we get \(\rN[Y_t]=\exp(-2\beta\theta t)\).
The tail distribution of the extinction time \(\zeta=\inf\{t>0,\ Y_t=0\}\) under the canonical measure is given by, for $t\ge 0$:
\begin{equation}\label{eq:LoiTempsExtinction}
 c(t) = \rN[\zeta > t]  =\lim_{\lambda \rightarrow \infty } u(t,
  \lambda)=\frac{2\theta}{\exp(2\beta\theta 
 t)-1}\cdot
\end{equation}
It is possible to construct a stationary version of this CB 
process using an immigration process. Let: 
\[
 \cn(\diff t,\diff Y) = \sum_{i\in I_Z} \delta_{(t_i,Y_i)}(\diff t,\diff Y)
\]
be a Poisson point measure on \(\mathbb{R}\times \mathbb{D}\) with intensity 
\(2\beta \diff t \, \rN[\diff Y]\). 
Following~\cite[Section~3.2]{chenSmallerPopulationSize2012}
on Q-processes, we define the stationary CB process \(\bZ=(Z_t, \, t\in 
\R)\) by:
\[
 Z_t = \sum_{t_i\le t} Y^i_{t-t_i}.
\]
This stationary CB process appears also as the limit of the CB process conditioned
not to be extinct \cite{Lambert2007}. It is distributed as the stationary Feller diffusion, which is a solution of the following equation (see Section 7.1 of \cite{chenSmallerPopulationSize2012}):
\[
 \diff Z_t = \sqrt{2\beta Z_t}\diff B_t +2\beta(1-\theta Z_t)\diff t,\quad t\in 
 \R,
\]
and the (stationary) one-dimensional marginal $Z_t$ is distributed 
as the sum of two independent exponential random variables with
parameter $2\theta$ (see  \cite[Eq.~(46)]{chenSmallerPopulationSize2012}). 
In particular we have $\E[Z_t]=1/\theta$.

\subsection{Genealogical tree of the stationary CB process}
\label{sec:def-cti}
The genealogy of the CB process $\bY$ (under the canonical measure) can
be described as a random tree encoded by a Brownian excursion as
follows. For a function \(g\in\mathbb{D}\) define a pseudo-distance on
$\R_+$ by, for $s,t\in \R_+$:
\[
 d_g(s,t) = g(s)+g(t)-2m_g(s,t)
 \quad\text{with}\quad
m_{g}(s,t)=\inf_{[s\wedge t,s\vee t]}g.
\]
The quotient metric space \(\ct_g=\R_+{/\{d_g=0\}}\), with the 
metric $d_g$, is then a \emph{real tree} \cite{Evans2005}; the equivalence class of
\(0\), denoted by $\root_g$, is called the root of the tree $\ct_g$.
The height of $x\in \ct_g$ is defined as its distance to the root, that
is, as $g(t)$ for any $t\in \R_+$ in the equivalence class $x$. We say that 
$s\in \R_+$ is an ancestor of $t\in \R_+$ if $g(s)=m_g(s,t)$; this defines a partial 
order on $\ct_g$, and we write $s\preceq t$ identifying $s$ and $t$ with their 
equivalence class.

In this framework, $g$ serves as the \emph{height process} (or contour process) that explores $\ct_g$ in a depth-first manner. For any $s \in \R_+$, an excursion of $g$ above the level $g(s)$ represents the entire sub-family of descendants originating from $s$. Formally, if $(s, t)$ is an interval such that $g(r) > g(s)$ for all $r \in (s,t)$ and $g(t)=g(s)$, the equivalence classes of all $u \in [s, t]$ constitute the sub-tree rooted at $s$, consisting of all individuals $x \in \ct_g$ such that $s \preceq x$. The return of $g$ to the level $g(s)$ at time $t$ signifies the point at which this entire sub-lineage has been explored.

Recall $\theta>0$ is given. Consider a Brownian motion with negative
drift
$B^{(\theta)}= (B^{(\theta)}_t=\sqrt{2/\beta}B'_t-2\theta t,\ t\ge 0)$,
where \(B'\) is a standard Brownian motion.
Let $\N[d\ct]$ denote the
push-forward measure of the Itô positive excursion measure of the
Brownian motion $B^{(\theta)}$ through the application
$ g \mapsto \ct_g$. The $\sigma$-finite measure $\N[d\ct]$ is defined on
the Polish space $\T$ of compact rooted real trees endowed with the so-called
Gromov-Hausdorff distance (where pointed compact metric spaces are identified up
to an isomorphic transformation). We simply denote the root of $\ct$ by
$\root$. 

We define a local time process of the tree $\ct$ denoted by $\cy=(\cy_a, 
a\geq 0)$ where $\cy_a$ is a random measure on $\ct$ which informally is the uniform measure on the elements of $\ct$ at distance $a$ from the root. 
More formally, let $(\ell^a(\rd s), a\geq 0)$ be the local time process of
$B^{(\theta)}$, with $\ell^a(\R_+)$ the total local time at level
$a$. For any $g\in \mathbb{D}$, let $\Pi_g$ be the natural projection from $\R_+$ on 
$\ct_g$. Then, for any $a\ge 0$, denote by $\cy_a$ the push-forward measure of 
$\ell^a$ on $\ct$ through the map $\Pi_{B^{(\theta)}}$.  According to
\cite[Theorem~1.4.1]{Duquesne2002}, the total mass process
$(\cy_a(\ct) ,\ a\ge 0)$ is distributed under $\N$ as the CB process $\bY$
with branching mechanism $\psi_\theta$ under the canonical measure
$\rN$. For this reason, we shall identify $Y_a$ with $\cy_a(\ct)$ for all
$a\geq 0$, and thus see the tree $\ct$ as the genealogical tree
associated to the CB $\bY$. See also~\cite{Duquesne2005a} for a direct
construction of measures $(\cy_a, a\geq 0)$ from the tree $\ct$. The
maximal height (distance from the root) of $\ct$ is distributed as the
lifetime $\zeta$ of the CB process $\bY$, and it will also be denoted by
$\zeta$.

\medskip

We now informally describe the genealogical tree associated to the
stationary CB process $\bZ$, see \cite{chenSmallerPopulationSize2012}.
Let $\sum_{i\in I_T} \delta_{(h_i,\ct_i)}(\rd h, \rd \bt)$ be a Poisson
point measure on \(\R\times \T\) with intensity
\(2\beta \diff h\N[\diff \bt]\). The tree \(\cti\) is
obtained by grafting the trees \(\ct_i\) at height \(h_i\) along the
infinite spine \(\R\) (and the root of $\ct_i$ is identified with
$h_i$ on the infinite spine $\R$). 
We refer to~\cite[Section~5.5]{abrahamBrownianContinuumRandom2022} for a more formal framework for rooted trees with a rooted distinguished (or marked) subtree; here the marked subtree is given by the spine $\R$ with root $0\in \R$. Let us mention that in this framework the grafting procedure is a  well defined measurable function. 

The local time process $(\cz_t, t\in \R)$
associated to \(\cti\) is the sum at each level
\(t\) of the local times at level \(t-h_i\) of all trees \(\ct_i\) with
\(h_i\leq t\): $\cz_t=\sum_{h_i\leq t} \cy^{i}_{t- h_i}$, where $\cy^i$
is the local time process of the tree $\ct_i$.
Then, the total mass process $(\cz_t(\cti), t\in \R)$
is distributed as the stationary CB process $\bZ$ with 
branching mechanism \(\psi_\theta\). As above, we shall identify
$\cz_t(\cti)$ with $Z_t$. Notice that the measure $\cz_t$ puts
mass only on the set of leaves of $\cti$ at level $t$. 

\subsection{Quantities related to the genealogical tree}
The height of
$x\in \cti$, say $H(x)$, is defined as $x$ if $x$ belongs to the
infinite spine $\R$ or, if $x$ belongs to the $\ct_i$ grafted at height
$h_i$, as its height in $\ct_i$ plus $h_i$. We define a partial order
on $\cti$ by $x\preceq y$ for $x,y\in \cti$ if either 
\begin{enumerate*}[label=(\roman*)]
\item $x$ and $y$ belong to the infinite spine $\R$ and $H(x)\leq H(y)$, or
\item $x$ belongs to the infinite spine $\R$ and $y$ to the tree $\ct_i$ grafted at
level $h_i$ with $H(x)\leq h_i$, or
\item $x$ and $y$ belong to the same tree $\ct_i$ and $x$ is an ancestor of $y$ in 
$\ct_i$.
\end{enumerate*}
For $x\preceq y$ we define $\llbracket x,y \rrbracket=\{z\in \cti\,
\colon\, x \preceq z \preceq y\}$ the branch from $x$ to $y$. It can be
isometrically identified with the segment $[H(x), H(y)]$ of $\R$. The
length measure $\length(\rd x)$ on $\cti$ is defined through its restriction to
$\llbracket x,y \rrbracket$ for all $x\preceq y$ as the image of the
Lebesgue measure on $[H(x), H(y)]$.

For a set $(x_j,\ j\in J)$ of
elements of $\cti$, we define the set of its ancestors as
$\{x\in \cti\, \colon\, x \preceq x_j \quad\text{for all}\quad j\in
J\}$. If this set is not empty, then it has a maximal element (for the
partial order $\preceq$) which is called the most recent common ancestor
(MRCA) of $(x_j, j\in J)$ and its height is the time to the MRCA (TMRCA).
We shall consider the time $-A$ of the MRCA of the extant population at
time $0$. Denoting by $\zeta_i$ for the maximal height of the tree
$\ct_i$, it is also defined as:
\[
 A=- \min\{h_i\, \colon\, \zeta_i+ h_i\geq 0\}.
\]

We also define $N_t$ as the number of ancestors at time $-t$ of the extant population
living at time 0 minus 1 (that is, we don't take into account the
infinite spine):
\[
  N_t=\Card \{i\in I_T\, \colon h_i<-t \quad\text{and}\quad  \zeta_i+
  h_i\geq 0\}.
\]
In particular, we have that a.s.:
\begin{equation}
  \label{eq:A-Nt}
  \{A>t\}=\{N_t\geq 1\}. 
\end{equation} According
to~\cite{chenSmallerPopulationSize2012}, we have that $N_t$ is,
conditionally on $Z_{-t}$, 
distributed as a Poisson random variable with mean $c(t) Z_{-t}$. In
particular, we have:
\begin{equation}
  \label{eq:ENt}
  \E[N_t]=\frac{c(t)}{\theta}\cdot
\end{equation}
   
\subsection{The Kesten tree}

We shall also use the so-called  Kesten tree $\ctk$ which is obtained by
grafting the trees \(\ct_i\) at height \(h_i>0\) along the semi-infinite
spine \(\R_+\) rooted  at $\root=0\in \R_+$.   The local  time
process  $(\cz^\mathrm{Kesten}_t, t\in  \R)$ associated  to \(\ctk\)  is
then                             defined                             as:
$\cz^\mathrm{Kesten}_t=\sum_{0<h_i\leq   t}  \cy^{i}_{t-   h_i}$.  Then, as a direct consequence of~\cite[Theorem~4.5]{Duquesne2005a} stated in the more general framework of L\'evy trees, the
 total
mass process $Z^\mathrm{Kesten}_{(0,t]}=(Z^\mathrm{Kesten}_s=\cz^\mathrm{Kesten}_s(\ctk), s\in (0,t])$ up to time $t$
is distributed as  the size-biased distribution of $Y_{(0,t]}=(Y_s, s\in (0, t])$ under the
excursion measure, that is, for $t>0$ and $h$ a measurable non-negative function:
\begin{equation}
  \label{eq:EZK}
  \E\left[h(Z^\mathrm{Kesten}_{(0,t]})\right]=\frac{\rN[Y_t\,
    h(Y_{(0,t]})]}{\rN[Y_t]}=\expp{2\beta \theta t}\rN\left[Y_t
\,    h(Y_{(0,t]})\right]  . 
\end{equation}


\section{Coalescent Point Process of sampled stationary trees}
\label{sec:sample-T}
We recall the following construction from
\cite{abrahamExactSimulationGenealogical2021}. Let $\cti$ be the
genealogical tree associated to the stationary CB process $\bZ$ defined
in the previous section. We shall consider the genealogical sub-tree
$\ct_n$ spanned by $n$ individuals uniformly chosen among the population
at time $0$. More precisely, let \((\cx_k,\ k\in \N^*)\) be independent
leaves of $\cti$ at a given level, say $0$ for simplicity, chosen
uniformly, that is according to the probability measure $\cz_0/Z_0$. For
$n\in \N^*$, let $\ct_n$ be the subtree spanned by the leaves
\(\cx_1, \ldots, \cx_n\) (that is, the smallest subtree of \(\cti\)
containing \(\cx_1,\ldots,\cx_n\)) rooted at the MRCA of
\(\cx_1, \ldots, \cx_n\). We refer to
\cite{abrahamExactSimulationGenealogical2021} for a more formal
definition. We now give an elementary representation of the tree
$\ct_n$.

\begin{enumerate}[label=(\roman*)]
\item Let \((\Eg, \Ed)\) be independent exponential random variables with 
parameter \(2\theta\); so that $Z_0$ is distributed as $\Eg+\Ed$. For
simplicity, we identify $Z_0$ with $\Eg+\Ed$.
 Let also \((U_k,\ k\in \N^*)\) be independent 
random variables, uniformly distributed on \([0,1]\), independent of 
\(\Eg,\Ed\). We define the positions $X_0=0$ and \(X_k = Z_0 U_k - \Eg\)
for \(k\in \N^*\). The position $X_0$ corresponds to the individual alive
at time $0$ of the immortal lineage.

\item Let $n\in \N^*$ be fixed. We consider the set of ``leaves'' 
 \(\mathcal{L}_n = \{-\Eg,\Ed,X_0,\dots,X_{n-1}\}\) and the corresponding
 order statistics $X_{(0)}=-\Eg<X_{(1)}< \ldots <X_{(n)}< X_{(n+1)}=\Ed$.
 For $k\in \{0, \ldots, n+1\}$ we consider the interval 
 $[X_{(k)}, X_{(k+1)}]$ for $X_{(k)}<0$, 
$[X_{(k-1)}, X_{(k)}]$ for $X_{(k)}>0$, and the singleton $\{X_{(k)}\}$
 for $X_{(k)}=0$, and denote by $I_k$ its length. Notice that
 $\sum_{k=0}^{n+1} I_k=Z_0$.

\item Recall the function $c$ defined in~\eqref{eq:LoiTempsExtinction}. 
For $\delta>0$, let $\zeta^*(\delta)$ be a random variable on \((0,\infty)\)
whose distribution is given by:
\[
 \P(\zeta^*(\delta)\leq t)= \expp{- \delta c(t)}
 \quad\text{for}\quad t>0.
\]
In particular, $\zeta^*(\delta)$ is distributed as:
\begin{equation}
 \label{eq:law-z*}
 \inv{2\beta\theta}\, \log\left(1+ \frac{2\theta \delta}{E}\right),
\end{equation}
where $E$ is an exponential random variable with mean 1. Notice that if
$(\zeta_i, \, i\in I_Z)$, with $I_Z$ at most countable, are independent
random variables with $\zeta_i$ distributed as $\zeta^*(\delta_i)$, then
$\sup_{i\in I_Z} \zeta_i$ is distributed as $\zeta^*(\delta)$ with
$\delta=\sum_{i\in I_Z} \delta_i$.

\medskip

Conditionally on 
\(\mathcal{L}_n\), let \((\zeta_k,\ 0\le k\le n+1)\) be 
independent random variables such that $\zeta_k$ is distributed as
$\zeta^*(I_k)$, with $E$ in~\eqref{eq:law-z*} independent of $I_k$,  for $0\leq k\leq n+1$, and consider the ancestral point measure on
$ \R\times \R_+$ (notice the sum is from 1 to $n$):
\begin{equation}
 \label{eq:def-PP-A}
 \ca_n=\sum_{k=1}^n\delta_{(X_{(k)},\zeta_k)}.
\end{equation}
Notice that $(0,0)$ is an atom of $\ca_n$.

 Finally, let \(\fT _n\) be the 
ancestral tree associated defined as following: attach the semi-infinite branch \((-\infty,0]\) at
the position $X_0=0$ on the segment \([-\Eg,\Ed]\), and for all
\(1\le k\le n\), such that $X_{(k)}\neq 0$, attach a branch with length
\(\zeta_k\) at the position \(X_{(k)}\) on the segment
\([-\Eg,\Ed]\). Then, identify the bottom of each branch such that
\(X_{(k)}<0\) (resp. \(X_{(k)}>0\)) with the point with depth
\(\zeta_k\) on the first branch with longer length on the right
(resp. on the left). Eventually cut the semi-infinite branch at its
last (going downwards) branching point, say $\root_n$, which is at length
$\max_{1\leq 
 k\leq n} \zeta_k$. Then, consider $\root_n$ as the root of
\(\fT _n\). An instance of the ancestral tree is represented in
Fig~\ref{fig:tikz}. 
\end{enumerate}

 The next result is a consequence of 
\cite[Lemma~4.1]{abrahamExactSimulationGenealogical2021}; notice however
that in \cite{abrahamExactSimulationGenealogical2021} the ancestral
lineage (that is the position of $X_0$) is given, and that $X_0$ is not
seen as a leaf of the sampled tree. In other words, the approach
developed in \cite{abrahamExactSimulationGenealogical2021} does not involve the
immortal lineage and thus sees the
stationary CB
process $\bZ$ as a CB process with immigration, whereas our approach
here takes into account the immortal lineage as $X_0$ is a leaf of \(\fT _n\). 

\begin{lem}[Representation of the genealogical tree of $n$ individuals]
 \label{lem:sampling}
 For $n\in \N^*$, the rooted tree $\ct_n$ spanned by
 \(\cx_1, \ldots, \cx_n\) is distributed as the rooted tree
 \(\fT _n\).
\end{lem}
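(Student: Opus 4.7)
The plan is to deduce this lemma from \cite[Lemma~4.1]{abrahamExactSimulationGenealogical2021}, which provides a CPP-like representation of the tree spanned by $n$ uniform leaves conditionally on the immortal lineage being treated as a given external spine. The adaptation consists in showing that the augmented representation $\fT_n$, where the immortal lineage is promoted to the distinguished marked position $X_0 = 0$ on the level-$0$ segment $[-\Eg, \Ed]$, carries the same distribution as $\ct_n$.

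First, I would use the spine decomposition of $\cti$ described in Section~\ref{sec:def-cti}: conditionally on the infinite spine $\R$, the grafted subtrees form a Poisson point measure with intensity $2\beta \diff h\, \rN[\diff \bt]$, and only those with $h_i \le 0 \le h_i + \zeta_i$ contribute to the level-$0$ population. A direct computation using~\eqref{eq:LoiTempsExtinction} together with the branching property shows that the total masses of the level-$0$ population on each side of $0$ on the spine are independent $\mathrm{Exp}(2\theta)$ variables, recovering the representation $Z_0 = \Eg + \Ed$ with $\Eg, \Ed$ independent.

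Second, I would invoke \cite[Lemma~4.1]{abrahamExactSimulationGenealogical2021}: conditionally on $(\Eg, \Ed)$, the positions along level $0$ of the $n$ uniformly sampled leaves, listed in the planar order induced by their location on the level-$0$ cross-section of $\cti$, are distributed as $n$ i.i.d. uniform variables on $[-\Eg, \Ed]$, and the coalescence heights between two consecutive such positions on the same side of the immortal lineage are independent, each distributed as $\zeta^*$ of the corresponding gap length. This relies on the Poissonian structure of the grafting, on the identity $\rN[\zeta > t] = c(t)$ from~\eqref{eq:LoiTempsExtinction}, and on the superposition property of $\zeta^*$ recalled below~\eqref{eq:law-z*}.

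Third, I would reinterpret this description as the $\fT_n$ construction. Inserting the marked point $X_0 = 0$ among the sample positions refines the partition of $[-\Eg, \Ed]$ into the subintervals $I_k$, and assigning independent heights $\zeta_k \sim \zeta^*(I_k)$ at each $X_{(k)}$ matches the coalescence heights from the previous step, combining contributions through the null atom at $X_0$ via the superposition identity $\sup(\zeta^*(\delta_1), \zeta^*(\delta_2)) \overset{(d)}{=} \zeta^*(\delta_1 + \delta_2)$. The assembly rule in the statement is the standard CPP reconstruction, asserting that the MRCA of two consecutive sampled leaves is located at the shallowest branch separating them; and cutting the semi-infinite spine at depth $\max_{1 \le k \le n} \zeta_k$ produces the root $\root_n$ of $\fT_n$, which is precisely the MRCA of the full sample $\cx_1, \ldots, \cx_n$.

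The main obstacle is the careful bookkeeping required to reconcile the two representations: one must verify that splitting the heights $\zeta_k$ via the additional marked point $X_0 = 0$, together with the modified gluing rule directed toward the immortal lineage on each side, produces the same rooted tree distribution as in \cite[Lemma~4.1]{abrahamExactSimulationGenealogical2021}, including the correct attachment and cutting of the semi-infinite spine.
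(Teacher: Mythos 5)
Your proposal follows exactly the route the paper itself takes: the paper gives no standalone proof of Lemma~\ref{lem:sampling}, deriving it directly from Lemma~4.1 of \cite{abrahamExactSimulationGenealogical2021} and noting only that the immortal lineage, sitting at $X_0=0$ inside $[-\Eg,\Ed]$ with $\Eg,\Ed$ independent exponentials of parameter $2\theta$, must now be treated as an additional leaf of the sampled tree. Your sketch fills in that same adaptation (spine decomposition, coalescence heights $\zeta^*(I_k)$ over the gaps, the superposition identity handling the null atom at $X_0$, and the cut of the semi-infinite branch at $\max_{1\le k\le n}\zeta_k$) in more detail than the paper does, and I see no error in it.
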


\usetikzlibrary{decorations}
\usetikzlibrary{decorations.pathmorphing}
\usetikzlibrary{decorations.pathreplacing}
\usetikzlibrary{decorations.shapes}
\usetikzlibrary{decorations.text}
\usetikzlibrary{decorations.markings}
\usetikzlibrary{decorations.fractals}
\usetikzlibrary{decorations.footprints}

\begin{figure}[htb]
\begin{tikzpicture}
\draw [-Stealth] (0,0) -- (11,0);
\node (0) at (0.5,0.5) {$-\Eg$};
\draw (0.5,-0.1) -- (0.5,0.1);
\node (X1) at (2,0.5) {$X_{(1)}$};
\draw[decorate,decoration={brace}]
(1.95,-1) -- (1.95,0) node[left,pos=0.5] {$\zeta_1$};
\draw (2,-0.1) -- (2,0.1);
\draw [color=red] (2,0) -- (2,-1);
\draw [dashed,color=red] (2,-1) -- (4,-1);
\node (X2) at (4,0.5) {$X_{(2)}$};
\draw[decorate,decoration={brace}]
(3.95,-2.5) -- (3.95,0) node[left,pos=0.5] {$\zeta_2$};
\draw (4,-0.1) -- (4,0.1);
\draw [color=red] (4,0) -- (4,-2.5);
\draw [dashed, color=red] (4,-2.5) -- (5.5,-2.5);
\node (X0) at (5.5,0.5) {$X_{(3)}$};
\draw (5.5,-3.5) -- (5.5,0.1);
\draw [densely dashed] (5.5,-3.5) -- (5.5,-4.7);
\draw [dashed] (5.5,-5.5) -- (5.5,-4.7); 
\node (X3) at (6.5,0.5) {$X_{(4)}$};
\draw[decorate,decoration={brace}]
(6.55,0) -- (6.55,-1.3) node[right,pos=0.5] {$\zeta_4$};
\draw (6.5,-0.1) -- (6.5,0.1);
\draw [color=red] (6.5,0) -- (6.5,-1.3);
\draw [dashed, color=red] (5.5,-1.3) -- (6.5,-1.3);
\node (X4) at (8.5,0.5) {$X_{(5)}$};
\draw[decorate,decoration={brace}]
(8.55,0) -- (8.55,-3.5) node[right,pos=0.5] {$\zeta_5$};
\draw (8.5,-0.1) -- (8.5,0.1);
\draw [color=red] (8.5,0) -- (8.5,-3.5);
\draw [dashed,color=red] (5.5,-3.5) -- (8.5,-3.5);
\node (Z0) at (10,0.5) {$\Ed$};
\draw (10,-0.1) -- (10,0.1);
\node (root) at (5,-3.5) {$\root_n$};
\node (root) at (5.5,-3.5) {$\bullet$};
\node (root) at (5,-4.7) {$\root$};
\node (root) at (5.5,-4.7) {$\bullet$};
\end{tikzpicture}

\caption{
  An instance for $n=5$  of the ancestral tree $\fT_n$ with its root
$\root_n$, which appears in Lemma~\ref{lem:sampling}. In this instance,
 the semi-infinite branch is attached to  $X_{(3)}=X_0=0$ and cut at
 the MRCA  $\root_n$ of the uniformly sampled individuals $\{X_1,
 \ldots, X_4\}$ in the whole population $[-\Eg, \Ed]$
 and $X_0$. The branch attached to $X_{(k)}$ has length $\zeta_k$, with
 $\zeta_3=0$ by convention as $X_{(3)}=0$. 
The tree $\fT'_n$ which appears in Lemma~\ref{lem:sampling2} is similar
but for  the semi-infinite branch which is now  cut at
  the MRCA $\root$ of the whole population $[-\Eg, \Ed]$. (Of course
  $\root$ is an ancestor of $\root_n$ and can be equal to $\root_n$.)}
\label{fig:tikz}
\end{figure}
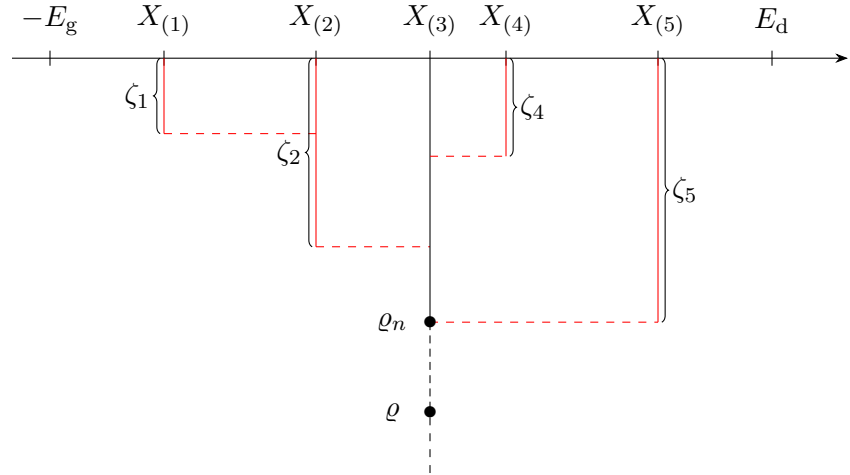

According to \cite[Proposition~7.3]{chenSmallerPopulationSize2012},
conditionally on \(Z_0\), the time $A$ of the grand MRCA of the entire
population at time \(0\) is distributed as \(\zeta^*(Z_0)\), and thus
also distributed as $\max_{0\leq k\leq n+1} \zeta_k$ conditionally on
\(\mathcal{L}_n\), that is:
\begin{equation}
  \label{eq:LoiASachantZ0}
 \P(A\le t\, |\, Z_0=z)= \P\left(\max_{0\leq k\leq n+1} \zeta_k\, |\,
   (U_k,\ k\in \N^*), \, \Eg+\Ed=z\right) = \exp(-c(t)z).  
\end{equation}
(This formula can also be deduced from~\eqref{eq:A-Nt} and the
distribution of  $N_t$.)

 We end this section with a technical lemma which will be used later
 on. Let $n\in \N^*$ be fixed. Using the ancestral process $\ca_n$
 from~\eqref{eq:def-PP-A}, we define for 
$j\leq \ell\in \llbracket 1,n\rrbracket$:
\begin{equation}
 \label{eq:def-zeta-m}
 \zm_{j:\,\ell}=\zeta_j \vee \cdots \vee \zeta_{\ell}=\sup_{j\le i\le \ell} \zeta_{i}.
\end{equation}
We also  define $ \zeta_{j:\,  \ell}^\mathrm{MRCA}$ for the time  to the
MRCA   of   $X_{(j)},\dots,X_{(\ell)}$.   By   construction,   we   have
$\zeta_{j:\,     \ell}^\mathrm{MRCA}\leq    \zm_{j:\,     \ell}$,    see
Fig.~\ref{fig:FourSituations}     for     various     instances     (and
Fig.~\ref{fig:Xj>0,blue} for an instance of strict inequality).
Notice that $\zeta_{j:\,  j}^\mathrm{MRCA}=0$ by construction and recall  that $\zeta_j=0$  if
$X_{(j)}=0$. We  have the  following  precise result.  


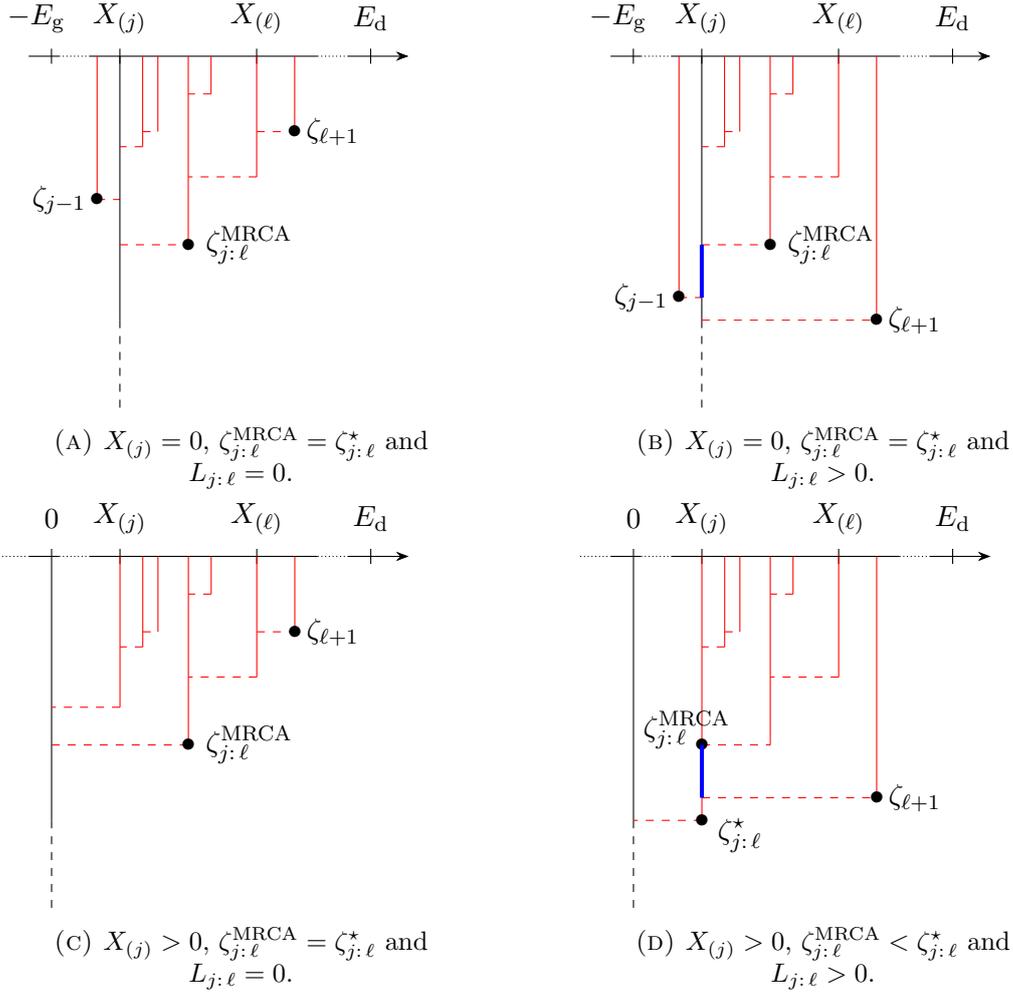
\begin{figure}[htb]
  \centering
\begin{subfigure}{0.4\textwidth}
  \begin{tikzpicture}
\draw [-Stealth] (4.2,0) -- (5,0);
\draw  (0,0) -- (0.4,0);
\draw [densely dotted] (0.4,0) -- (0.8,0);
\draw  (0.8,0) -- (3.8,0);
\draw [densely dotted] (3.8,0) -- (4.2,0);

\node (0) at (0.1,0.5) {$-\Eg$};
\draw (0.3,-0.1) -- (0.3,0.1);

\draw [color=red] (0.9,0) -- (0.9,-1.9);
\draw [dashed,color=red] (0.9,-1.9) -- (1.2,-1.9);
\node (MRCA) at (0.9,-1.9) {$\bullet$};
\node (MRCA) at (0.4,-1.9) {$\zeta_{j-1}$};

\node (X1) at (1.2,0.5) {$X_{(j)}$};
\draw (1.2,-0.1) -- (1.2,0.1);
\draw (1.2,0) -- (1.2,-3.5);
\draw [dashed] (1.2,-3.5) -- (1.2,-4.7); 

\draw [color=red] (1.5,0) -- (1.5,-1.2);
\draw [dashed,color=red] (1.5,-1.2) -- (1.2,-1.2);

\draw [color=red] (1.7,-1) -- (1.7,0);
\draw [dashed,color=red] (1.5,-1) -- (1.7,-1);

\draw [color=red] (2.1,-2.5) -- (2.1,0);
\draw [dashed,color=red] (1.2,-2.5) -- (2.1,-2.5);

\node (MRCA) at (2.1,-2.5) {$\bullet$};
\node (MRCA) at (2.9,-2.5) {$\zeta_{j:\, \ell}^\mathrm{MRCA}$};

\draw [color=red] (2.4,-0.5) -- (2.4,0);
\draw [dashed,color=red] (2.4,-0.5) -- (2.1,-0.5);

\node (X2) at (3,0.5) {$X_{(\ell)}$};
\draw (3,-0.1) -- (3,0.1);
\draw [color=red] (3,-1.6) -- (3,0);
\draw [dashed,color=red] (3,-1.6) -- (2.1,-1.6);

\draw [color=red] (3.5,-1) -- (3.5,0);
\draw [dashed,color=red] (3,-1) -- (3.5,-1);
\node (MRCA) at (3.5,-1) {$\bullet$};
\node (MRCA) at (4,-1) {$\zeta_{\ell+1}$};

\node (Z0) at (4.5,0.5) {$\Ed$};
\draw (4.5,-0.1) -- (4.5,0.1);


\end{tikzpicture}
\label{fig:Xj=0,no-blue}
\caption{$X_{(j)}=0$,  $ \zeta_{j:\, \ell}^\mathrm{MRCA}=
  \zm_{j:\,\ell}$ and $L_{j:\, \ell}=0$.}
\end{subfigure}
 \hspace{1cm}
\begin{subfigure}{0.4\textwidth}
  \begin{tikzpicture}
\draw [-Stealth] (4.2,0) -- (5,0);
\draw  (0,0) -- (0.4,0);
\draw [densely dotted] (0.4,0) -- (0.8,0);
\draw  (0.8,0) -- (3.8,0);
\draw [densely dotted] (3.8,0) -- (4.2,0);

\node (0) at (0.1,0.5) {$-\Eg$};
\draw (0.3,-0.1) -- (0.3,0.1);

\draw [color=red] (0.9,0) -- (0.9,-3.2);
\draw [dashed,color=red] (0.9,-3.2) -- (1.2,-3.2);
\node (MRCA) at (0.9,-3.2) {$\bullet$};
\node (MRCA) at (0.4,-3.2) {$\zeta_{j-1}$};

\node (X1) at (1.2,0.5) {$X_{(j)}$};
\draw (1.2,-0.1) -- (1.2,0.1);
\draw (1.2,0) -- (1.2,-3.5);
\draw [dashed] (1.2,-3.5) -- (1.2,-4.7); 

\draw [color=red] (1.5,0) -- (1.5,-1.2);
\draw [dashed,color=red] (1.5,-1.2) -- (1.2,-1.2);

\draw [color=red] (1.7,-1) -- (1.7,0);
\draw [dashed,color=red] (1.5,-1) -- (1.7,-1);

\draw [color=red] (2.1,-2.5) -- (2.1,0);
\draw [dashed,color=red] (1.2,-2.5) -- (2.1,-2.5);

\node (MRCA) at (2.1,-2.5) {$\bullet$};
\node (MRCA) at (2.9,-2.5) {$\zeta_{j:\, \ell}^\mathrm{MRCA}$};

\draw [color=red] (2.4,-0.5) -- (2.4,0);
\draw [dashed,color=red] (2.4,-0.5) -- (2.1,-0.5);

\node (X2) at (3,0.5) {$X_{(\ell)}$};
\draw (3,-0.1) -- (3,0.1);
\draw [color=red] (3,-1.6) -- (3,0);
\draw [dashed,color=red] (3,-1.6) -- (2.1,-1.6);

\draw [color=red] (3.5,-3.5) -- (3.5,0);
\draw [dashed,color=red] (1.2,-3.5) -- (3.5,-3.5);
\node (MRCA) at (3.5,-3.5) {$\bullet$};
\node (MRCA) at (4,-3.5) {$\zeta_{\ell+1}$};

\draw [line width= 1.5, blue] (1.2,-2.5) -- (1.2,-3.2);

\node (Z0) at (4.5,0.5) {$\Ed$};
\draw (4.5,-0.1) -- (4.5,0.1);


\end{tikzpicture}
\caption{$X_{(j)}=0$,  $ \zeta_{j:\, \ell}^\mathrm{MRCA}=
  \zm_{j:\,\ell}$ and $L_{j:\, \ell}>0$.}
\label{fig:Xj=0-with-blue}   
\end{subfigure}



\begin{subfigure}{0.4\textwidth}
\begin{tikzpicture}
\draw [-Stealth] (4.2,0) -- (5,0);
\draw  (0,0) -- (0.4,0);
\draw [densely dotted] (0.4,0) -- (0.8,0);
\draw [densely dotted] (-0.4,0) -- (0,0);
\draw  (0.8,0) -- (3.8,0);
\draw [densely dotted] (3.8,0) -- (4.2,0);

\node (X0) at (0.3,0.5) {$0$};
\draw (0.3,0.1) -- (0.3,-3.5);
\draw [dashed] (0.3,-3.5) -- (0.3,-4.7);


\node (X1) at (1.2,0.5) {$X_{(j)}$};
\draw (1.2,-0.1) -- (1.2,0.1);
\draw [color=red] (1.2,0) -- (1.2,-2);
\draw [dashed,color=red] (1.2,-2) -- (0.3,-2); 

\draw [color=red] (1.5,0) -- (1.5,-1.2);
\draw [dashed,color=red] (1.5,-1.2) -- (1.2,-1.2);

\draw [color=red] (1.7,-1) -- (1.7,0);
\draw [dashed,color=red] (1.5,-1) -- (1.7,-1);

\draw [color=red] (2.1,-2.5) -- (2.1,0);
\draw [dashed,color=red] (0.3,-2.5) -- (2.1,-2.5);

\node (MRCA) at (2.1,-2.5) {$\bullet$};
\node (MRCA) at (2.9,-2.5) {$\zeta_{j:\, \ell}^\mathrm{MRCA}$};

\draw [color=red] (2.4,-0.5) -- (2.4,0);
\draw [dashed,color=red] (2.4,-0.5) -- (2.1,-0.5);

\node (X2) at (3,0.5) {$X_{(\ell)}$};
\draw (3,-0.1) -- (3,0.1);
\draw [color=red] (3,-1.6) -- (3,0);
\draw [dashed,color=red] (3,-1.6) -- (2.1,-1.6);

\draw [color=red] (3.5,-1) -- (3.5,0);
\draw [dashed,color=red] (3,-1) -- (3.5,-1);
\node (MRCA) at (3.5,-1) {$\bullet$};
\node (MRCA) at (4,-1) {$\zeta_{\ell+1}$};

\node (Z0) at (4.5,0.5) {$\Ed$};
\draw (4.5,-0.1) -- (4.5,0.1);


\end{tikzpicture}
\caption{$X_{(j)}>0$,  $ \zeta_{j:\, \ell}^\mathrm{MRCA}=
  \zm_{j:\,\ell}$ and $L_{j:\, \ell}=0$.}
\label{fig:Xj>0,no-blue}   
\end{subfigure}
\hspace{1cm}
\begin{subfigure}{0.4\textwidth}
\begin{tikzpicture}
\draw [-Stealth] (4.2,0) -- (5,0);
\draw  (0,0) -- (0.4,0);
\draw [densely dotted] (0.4,0) -- (0.8,0);
\draw [densely dotted] (-0.4,0) -- (0,0);
\draw  (0.8,0) -- (3.8,0);
\draw [densely dotted] (3.8,0) -- (4.2,0);

\node (X0) at (0.3,0.5) {$0$};
\draw (0.3,0.1) -- (0.3,-3.5);
\draw [dashed] (0.3,-3.5) -- (0.3,-4.7);


\node (X1) at (1.2,0.5) {$X_{(j)}$};
\draw (1.2,-0.1) -- (1.2,0.1);
\draw [color=red] (1.2,0) -- (1.2,-3.5);
\draw [dashed,color=red] (1.2,-3.5) -- (0.3,-3.5); 

\draw [color=red] (1.5,0) -- (1.5,-1.2);
\draw [dashed,color=red] (1.5,-1.2) -- (1.2,-1.2);

\draw [color=red] (1.7,-1) -- (1.7,0);
\draw [dashed,color=red] (1.5,-1) -- (1.7,-1);

\draw [color=red] (2.1,-2.5) -- (2.1,0);
\draw [dashed,color=red] (1.2,-2.5) -- (2.1,-2.5);

\node (Z*) at (1.2,-3.5) {$\bullet$};
\node (Z*) at (1.7,-3.7) {$\zm_{j:\,\ell}$};

\node (MRCA) at (1.2,-2.5) {$\bullet$};
\node (MRCA) at (1,-2.3) {$\zeta_{j:\, \ell}^\mathrm{MRCA}$};

\draw [color=red] (2.4,-0.5) -- (2.4,0);
\draw [dashed,color=red] (2.4,-0.5) -- (2.1,-0.5);

\node (X2) at (3,0.5) {$X_{(\ell)}$};
\draw (3,-0.1) -- (3,0.1);
\draw [color=red] (3,-1.6) -- (3,0);
\draw [dashed,color=red] (3,-1.6) -- (2.1,-1.6);

\draw [color=red] (3.5,-3.2) -- (3.5,0);
\draw [dashed,color=red] (1.2,-3.2) -- (3.5,-3.2);
\node (MRCA) at (3.5,-3.2) {$\bullet$};
\node (MRCA) at (4,-3.2) {$\zeta_{\ell+1}$};

\node (Z0) at (4.5,0.5) {$\Ed$};
\draw (4.5,-0.1) -- (4.5,0.1);

\draw [line width= 1.5, blue] (1.2,-2.5) -- (1.2,-3.2);


\end{tikzpicture}
\caption{$X_{(j)}>0$,   $\zeta_{j:\, \ell}^\mathrm{MRCA}<
  \zm_{j:\,\ell}$   and $L_{j:\, \ell}>0$.}
\label{fig:Xj>0,blue}   
\end{subfigure}

\caption{Four possible configurations of \(X_{(j)},\dots,X_{(\ell)}\)
  with their TMRCA $\zeta_{j:\, \ell}^\mathrm{MRCA}$, 
along with locations (in blue and of length $L_{j:\, \ell}$) for $k$-admissible mutations (with
$k=\ell-j+1$) carried only by  
this set of leaves, whenever these exist. Notice that  $\zeta_{j:\,
  \ell}^\mathrm{MRCA}$ is strictly less than $  \zm_{j:\,\ell}$ only in
the bottom left figure.}
\label{fig:FourSituations}
\end{figure}

\begin{lemma}[Time to the MRCA of consecutive individuals]
 \label{lem:ZetaMinus}
 Let $n\in \N^*$ be given. Let $1\leq j< \ell\leq n$. We have:
\[
\zeta_{j:\, \ell}^\mathrm{MRCA} = \begin{cases}
 \zm_{j+1: \, \ell} & \text{if } X_{(j)} \geq 0, \\
 \zm_{j:\, \ell-1} &\text{if }X_{(\ell)}\leq 0, \\
 \zm_{j:\, \ell} &\text{if }
X_{(j)}X_{(\ell)} \leq 0. 
 \end{cases}
\]
\end{lemma}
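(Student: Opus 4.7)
The proof will proceed by a direct case analysis on the signs of $X_{(j)}$ and $X_{(\ell)}$, exploiting the coalescent-point-process (CPP) structure of the grafting rules on each side of the spine at $X_0=0$. The guiding principle is that the branches grafted to the right of the spine obey the usual CPP ``attach to the first longer branch on the left'' rule, so that the TMRCA of two positive leaves $X_{(a)},X_{(b)}$ with $a<b$ equals $\max_{a<i\leq b}\zeta_i$, while those on the left obey the mirror rule and yield TMRCA $\max_{a\leq i<b}\zeta_i$. Both identities are routine inductive consequences of the grafting rule recalled in Section~\ref{sec:sample-T}, and I will take them as starting points.

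In the case $X_{(j)}\geq 0$, all indices $j,\ldots,\ell$ lie on the non-negative half-axis. If $X_{(j)}>0$, the positive-side CPP identity gives directly $\zeta_{j:\,\ell}^{\mathrm{MRCA}}=\zm_{j+1:\,\ell}$. If $X_{(j)}=0$, then $X_{(j)}$ sits on the semi-infinite spine, which may be treated as a branch of infinite length immediately to the left of $X_{(j+1)},\ldots,X_{(\ell)}$; the same positive-side formula applies and still yields $\zm_{j+1:\,\ell}$, consistent with the convention $\zeta_j=0$. The case $X_{(\ell)}\leq 0$ is handled symmetrically, via the left-side (mirror) formula, and gives $\zm_{j:\,\ell-1}$.

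In the remaining case $X_{(j)}X_{(\ell)}\leq 0$, the boundary sub-cases $X_{(j)}=0$ and $X_{(\ell)}=0$ reduce to the previous two cases using $\zeta_j=0$ or $\zeta_\ell=0$, so only the sub-case $X_{(j)}<0<X_{(\ell)}$ requires new work. Let $k^*$ be the unique index with $j<k^*<\ell$ and $X_{(k^*)}=0$. Iterating the ``attach on the right'' rule along the cumulative maximum of $\zeta_j,\ldots,\zeta_{k^*-1}$, I will show that the deepest attachment point to the spine among the negative-side leaves $X_{(j)},\ldots,X_{(k^*-1)}$ occurs at depth $\zm_{j:\,k^*-1}$; the mirror statement gives depth $\zm_{k^*+1:\,\ell}$ on the positive side. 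The MRCA of the whole family then lies on the spine at the maximum of these two depths, which, together with $\zeta_{k^*}=0$, equals $\zm_{j:\,\ell}$, as claimed. The main technical point is precisely this cumulative-maximum argument: it is routine but non-local, because the grafting rule is iterative and several short branches can chain into a tall one before reaching the spine. Once it is in place, the three cases assemble immediately into the stated formula.
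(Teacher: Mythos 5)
Your argument is correct and follows essentially the same route as the paper: a case analysis on the signs of $X_{(j)}$ and $X_{(\ell)}$, with the one-sided cases resolved by the ``attach to the first longer branch'' grafting rule (the paper derives the needed facts inline by locating where the longest branch among $\zeta_j,\dots,\zeta_\ell$ attaches, rather than quoting the pairwise CPP identity as a black box) and the mixed-sign case resolved by observing that the MRCA lies on the spine at the depth of the deepest attachment, namely $\zm_{j:\,\ell}$. Your cumulative-maximum/record-chain justification of the attachment depths is precisely the content the paper treats as geometrically evident, so there is no gap.
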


\begin{proof}
 In the first case (see Fig. \ref{fig:FourSituations} on the top left 
 for an illustration), we consider that $X_{(j)}=0$, and thus 
 $\zeta_j=0$. Then, the branch with length \(\zm_{j: \, \ell}\) 
 necessarily branches on the ancestral branch of \(X_{(j)}\) (that is, 
 the branch attached 
 to $X_{(j)}$), and the branching point is the MRCA of $X_{(j)},
 \ldots,X_{(\ell)}$. Thus the time to the MRCA is
 $\zeta_{j:\, \ell}^\mathrm{MRCA}= \zm_{j+1: \, \ell}=\zm_{j: \, 
 \ell}$, where we used that $\zeta_j=0$ for the last equality.

In the second case, we consider that $X_{(j)}> 0 $ and
\(\zeta_j=\zm_{j: \, \ell}\), see an instance in
Fig.~\ref{fig:FourSituations} on the bottom right. Then, the branch with 
length \(\zm_{j+1: \, \ell}\) necessarily branches on the ancestral 
branch of \(X_{(j)}\), and the branching point is the MRCA of
$X_{(j)},\ldots,X_{(\ell)}$. This also gives
$\zeta_{j:\, \ell}^\mathrm{MRCA}= \zm_{j+1: \, \ell}$.

 In the third case, we consider that $X_{(j)}> 0 $
 and there exists $i\in \llbracket 2,\ell\rrbracket$ such that $\zeta_i
 =\zm_{j:\, \ell}$, and thus $\zeta_i
 =\zm_{j+1:\, \ell}$ (see Fig. \ref{fig:FourSituations} bottom left 
 for an illustration of this configuration). Let $i_g=\inf\{i'\in 
 \llbracket 1,j-1\rrbracket \, \colon\, \zeta_{i'}> 
 \zeta_i\quad\text{or}\quad X_{(i')}=0\}$. By definition, the 
 ancestral branch of the leaf $X_{(i)}$ branches onto the ancestral 
 branch of $X_{(i_g)}$ if $X_{(i_g)}>0$ or onto the spine if $X_{(i_g)}
 =0$. In both cases, the branching point is the MRCA of $X_{(j)},
 \ldots,X_{(\ell)}$. This also gives $\zeta_{j:\, \ell}^\mathrm{MRCA}= 
 \zeta_i=\zm_{j+1: \, \ell}$.

\medskip Those three cases give a complete picture when $X_{(j)}\geq
0$. The case $X_{(\ell)}\leq 0$ is similar. So we are left with the 
case $X_{(j)}X_{(\ell)}<0$, where one of the leaves $X_{(j)}, \ldots,
X_{(\ell)}$ belongs to the infinite spine (see Fig. 
\ref{fig:FourSituations} top right). In this case, the MRCA of 
 $X_{(j)}, \ldots, X_{(\ell)}$ is on the spine at height $\zeta_{j:\, 
 \ell}^\mathrm{MRCA}=\zm_{j: \, \ell}$,
\end{proof}

\section{Discrete Frequency Spectrum}\label{sec:DiscreteSFS}

The  neutral mutations  on the  stationary population  are given  by the
atoms of a  point measure on $\cti$ with intensity  a mutation rate, say
$\mu>0$, times the length measure  $\length(\rd x)$ on $\cti$. We sample
$n\in  \N^*$ individuals  from  the extant  population  in a  stationary
branching process  at a  given time,  say $0$  for simplicity.   In this
section, we will first give some  general results for the site frequency
spectra    of    the    ancestral    tree    \(\ct_n\),    defined    in
Section~\ref{sec:sample-T},     with    $n$     fixed.     Thanks     to
Lemma~\ref{lem:sampling},  we  can  recast  the problem  using  a  point
measure \(\cm=\sum_{i\in I_M} \delta_{m_i}\) on the random tree \(\fT _n\)
(associated  with the  ancestral point  measure $\ca_n$)  with intensity
$\mu$  times the  length measure  on its  branches. The  associated site
frequency spectrum \((\xi_k^{(n)},\ 1\le k\le n-1)\) is then defined by:
\begin{equation}\label{eq:DefSFS}
    \xi_k^{(n)} = \sum_{i\in I_M} \ind_{\{c_n(m_i)=k\}},\quad 
\end{equation}
where for \(x\in \fT _n\), \(c_n(x)\in \llbracket 1,n\rrbracket\) is the 
number of leaves $X_{(j)}$ among the \(n\) sampled leaves such that $x\preceq 
X_{(j)}$. Note that the only vertex of $\fT _n$ such that $c_n(x)=n$ 
is the root $\root$, which justifies that we are only considering the SFS 
up to index $k=n-1$.


We stress  that if a mutation is present in exactly \(k\in \llbracket
1,n-1\rrbracket
\) leaves 
of \(\fT _n\), then those leaves necessarily have consecutive positions, 
in the sense that 
the leaves carrying that mutation are exactly
$X_{(j)},\dots,X_{(j+k-1)}$
for some $j\in \llbracket 1,n-k+1\rrbracket$.
In order to be carried by exactly $k$ consecutive leaves, 
a mutation has to be ancestral to their MRCA, but no ancestral to any
other leaf. We will call such mutations 
$k$-\emph{admissible}.  

\begin{lemma}[$k$-admissible mutations]
  \label{lem:Lk}
  Let $n\in \N^*$ and $k\in  \llbracket
1,n-1\rrbracket$ be given.
 Conditionally on the ancestral point
 measure $\ca_n$, the number of 
$k$-admissible mutations carried by the \(k\)-tuple
\(X_{(j)},\dots,X_{(\ell)}\), for $j\in \llbracket 1,n-k+1\rrbracket$
and $\ell=j+k-1$, is Poisson
distributed with mean \( \mu L_{j:\, \ell}\), where: 
\begin{equation}\label{eq:Lk}
  L_{j:\, \ell} =
  \begin{cases}
    [\zeta_j \wedge \zeta_{\ell+1} - \zeta^\mathrm{MRCA}_{j:\, \ell}]_+
    & \text{if}\quad  X_{(j)}>0,\\
    [\zeta_{j-1} \wedge \zeta_{\ell} - \zeta^\mathrm{MRCA}_{j:\, \ell}]_+
    & \text{if}\quad X_{(\ell)} <0,\\
    [\zeta_{j-1} \wedge \zeta_{\ell+1} - \zeta^\mathrm{MRCA}_{j:\, \ell} ]_+
    & \text{if}\quad X_{(j)} X_{(\ell)}  \leq 0,
  \end{cases}
\end{equation}
where in~\eqref{eq:Lk}  we set
$\zeta_0=\zeta_{n+1}=+\infty $ by convention and
$\zeta^\mathrm{MRCA}_{j:\, \ell}=0$ if  $k=1$ by construction.
\end{lemma}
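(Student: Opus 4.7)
The plan is to exploit the conditional Poisson structure of the mutation process. Given $\ca_n$, the mutation measure $\cm$ is a PPP of intensity $\mu\length$ on $\fT_n$, so the number of $k$-admissible mutations carried by $X_{(j)},\dots,X_{(\ell)}$ is Poisson with parameter $\mu$ times the total length of the admissible region
\[
S_{j:\ell} = \{x \in \fT_n \,\colon\, x \preceq X_{(i)} \text{ if and only if } j \le i \le \ell\}.
\]
Hence the lemma reduces to showing that $S_{j:\ell}$ is a (possibly empty) segment of length exactly $L_{j:\ell}$.

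Write $\fm$ for the MRCA of $X_{(j)},\dots,X_{(\ell)}$, at depth $\zeta^\mathrm{MRCA}_{j:\ell}$. The set of ancestors of $\fm$ forms the path from $\fm$ toward the root, and the set of ancestors of any outside leaf $X_{(i)}$ is a path starting from the root. Consequently $S_{j:\ell}$ equals the portion of the ancestral path of $\fm$ strictly between $\fm$ and the first encountered ancestor of some outside leaf: a single segment whose length is the depth difference between $\fm$ and this first ``outside junction''. Using Lemma~\ref{lem:ZetaMinus}, $\fm$ lies on $X_{(j)}$'s branch when $X_{(j)}>0$, on $X_{(\ell)}$'s branch when $X_{(\ell)}<0$, and on the infinite spine when $X_{(j)}X_{(\ell)}\le 0$. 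Going from $\fm$ toward the root along the branch it sits on, the segment terminates either at the bottom of this branch --- which happens at depth $\zeta_j$ in case 1 and $\zeta_\ell$ in case 2 (and not at all on the spine in case 3) --- or at the shallowest depth at which an outside leaf's lineage merges in. Owing to the CPP connection rule (each leaf's branch bottom is identified with the first longer branch on the appropriate side), this shallowest merging depth on the right is $\zeta_{\ell+1}$ and on the left is $\zeta_{j-1}$. Combining these possibilities yields the announced formulas, with the convention $\zeta_0=\zeta_{n+1}=+\infty$ absorbing the cases where one side offers no termination.

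The main obstacle is justifying that only the immediate outside neighbors $X_{(j-1)}$ and $X_{(\ell+1)}$ control the termination, i.e., that no further outside leaf $X_{(i)}$ contributes a shallower merging depth onto the branch carrying $\fm$. This is a monotonicity property of the CPP construction, proved inductively: a leaf $X_{(i)}$ with $i>\ell+1$ either merges first with $X_{(\ell+1)}$'s branch (and then reaches the branch of $\fm$ only at depth $\geq\zeta_{\ell+1}$) or skips $X_{(\ell+1)}$ by connecting to a strictly longer branch, which again forces a merging at depth $\geq\zeta_{\ell+1}$ onto the branch of $\fm$; a symmetric argument handles leaves on the left. The empty-segment case is then automatically encoded by the positive-part truncation $[\cdot]_+$, and the $k=1$ convention $\zeta^\mathrm{MRCA}_{j:j}=0$ makes the three formulas consistent with the admissible region being $X_{(j)}$'s entire branch down to its first outside junction.
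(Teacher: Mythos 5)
Your proof is correct and follows essentially the same route as the paper: reduce to computing the length of the admissible region via the Poisson restriction property, locate the MRCA via Lemma~\ref{lem:ZetaMinus}, and read off the termination depth of the admissible segment from the CPP grafting rule, with the $[\cdot]_+$ absorbing the degenerate cases. The only difference is that you make explicit the monotonicity argument showing that no outside leaf beyond the immediate neighbours $X_{(j-1)}$ and $X_{(\ell+1)}$ can impose a shallower junction, a point the paper leaves implicit.
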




Intuitively, the first two cases in equation~\eqref{eq:Lk} represent the
two    symmetric   situations    in    which   all    of   the    leaves
\(X_{(j)},\ldots,X_{(j+k-1)}\) are on one side of the infinite spine. In
those  cases, $k$-admissible  mutations are  possible only  on ancestral
branches      of          \(X_{(j)},\ldots,X_{(j+k-1)}\),      see
Fig.~\ref{fig:Xj>0,blue}. The third case  represents the contribution of
the spine,  which is  nonzero if  and only  if both  \(\zeta_{j-1}\) and
\(\zeta_{j+k}\) are greater than the  longest ancestral branch among the
\(\zeta_1,\ldots,\zeta_k\) and if \(X_{(j-1)}\) and \(X_{(j+k)}\) lie on
opposite sides of the spine, see Fig.~\ref{fig:Xj=0-with-blue}.

\begin{proof} 
 We first assume that \(X_{(j)} >0\), meaning that all the $k$
 consecutive leaves $X_{(j)}, \ldots, X_{(\ell)}$ are on the right side
 of the spine. The mutations carried only by the \(k\) leaves need to
 lie on the stem of the genealogical tree, say
 $ \fT _{j:\, \ell}$, of $X_{(j)}, \ldots, X_{(\ell)}$, which
 is of length $\zm_{j:\, \ell}-\zeta_{j:\, \ell}^\mathrm{MRCA}$. By
 Lemma~\ref{lem:ZetaMinus}, this length is also equal to
 $[\zeta_j - \zm_{j+1:\, \ell}]_+ $. We shall now assume it is
 positive, that is, $\zeta_{j:\, \ell}^\mathrm{MRCA}< \zeta_j$, see
 Fig.~\ref{fig:Xj>0,blue} for an instance. 

 If
 \(\zeta_{\ell+1}\leq \zeta_{j:\, \ell}^\mathrm{MRCA}\), all mutations
 on the stem will also be carried by \(X_{(\ell+1)}\) since the
 ancestral branch of \(X_{(\ell+1)}\) will be grafted on
 $ \fT _{j:\, \ell}$, providing no $k$-admissible mutations.
 If $\zeta_{j:\, \ell}^\mathrm{MRCA}< \zeta_{\ell+1} \leq \zeta_j$,
 then the ancestral branch of $X_{(\ell+1)}$ will be grafted on the
 stem of \(\fT _{j:\, \ell}\), and only mutations between the
 root of this sub-tree and that branching point will be $k$-admissible.
 If \(\zeta_{\ell+1} > \zeta_j\), then the ancestral branch of
 \(X_{(\ell+1)} \) will be
 grafted below the stem, and all mutations on the stem are then
 $k$-admissible.

In conclusion the part of branch carrying the $k$-admissible mutations
is of length $ [\zeta_{\ell+1} \wedge \zeta_j -
\zeta^\mathrm{MRCA}_{j:\, \ell}]_+$. 

\medskip

The  case $X_{(\ell)}<0$  is  similar.   So, we  now  consider the  case
$X_{(j)}X_{(\ell)}\leq  0$,  see  Fig.~\ref{fig:Xj=0-with-blue}  for  an
instance   of   $L_{j:\,   \ell}>0$.   In   particular,   there   exists
$i\in\llbracket j,\ell\rrbracket$ (random) such that $X_{(i)}=0$, and the MRCA of
$X_{(j)},  \ldots,  X_{(\ell)}$  belongs  to the  ancestral  lineage  of
$X_{(i)}$, that is the spine.  The $k$-admissible mutations then need to
be   on   the  spine   below   the   MRCA   but   above  the   MRCA   of
$X_{(j-1)},     \ldots,     X_{(\ell)}$      and     the     MRCA     of
$X_{(j)},     \ldots,     X_{(\ell+1)}$.    Using     the     convention
$\zeta_0=\zeta_{n+1}=+\infty $,  we deduce  the part of the  branch carrying
the       $k$-admissible       mutations        is       of       length
$[\zeta_{j-1}  \wedge \zeta_{\ell+1}  - \zeta^\mathrm{MRCA}_{j:\,  \ell}
]_+$.
\end{proof}

The   number of 
$k$-admissible mutations carried by  \(\fT _n\)  is then Poisson
distributed with mean  $\mu L_k$ with:
\begin{equation}
    \label{eq:def-L}
L_k=\sum_{j=1}^{n-k+1} L_{j:\, j+k-1}.
\end{equation}
We have a simple closed formula for the expectation of $L_k$. 
Recall~\eqref{eq:law-z*}.
Let $U_{(1)} < \cdots < U_{(n)}$ be the order statistics of $n$
independent uniform random variable on $[0, 1]$ which are also
independent of $Z_0$ and of an independent exponential random variable $E$ with mean 1. 
Set $S_0=0$ and for $\ell\in \llbracket
1,n \rrbracket$:
\[
S_\ell=\E\left[ \inv{2\beta\theta}\, \log\left(1+ \frac{2\theta Z_0 \, U_{(\ell)} }{E}\right)\, \Big|\, Z_0\right]
\]

In what follows, we shall often use the digamma function  \(\Psi(x)=\Gamma'(x)/\Gamma(x)\) 
and the fact that for $x>0$:
\begin{equation}
   \label{eq:Psi}
  \log(x)  - \inv{x} \leq  \Psi(x) \leq  \log(x) -
  \inv{2x}
  \quad\text{and}\quad
  \Psi(x+1)=\Psi(x) +\inv{ x}\cdot
\end{equation}

\begin{theorem}\label{the:ExpectedSFS}
For $1\leq k\leq n-3$ we have:
\begin{multline}
    \label{eq:Espxi}
    \beta  \theta \E[\xi^{(n)}_k]= \frac{1}{2k} \left( 1 + \frac{n}{n-k-2}\right)
    + \frac{n}{(n-k)(n-k-2)} \\
    + \frac{n(n-1)}{(n-k)(n-k-1)(n-k-2)}\, \left( \Psi(k) - \Psi(n-1)\right),
\end{multline}
and for $k=n-2, n-1$:
\[
\beta  \theta   \E[\xi^{(n)}_{k_n}]=\frac{2}{3n}+ O\left(\frac{1}{n^2}\right).
\]
\end{theorem}
\begin{proof}
Let $T= E/2\theta Z_0$, where $E$ is an exponential random variable with mean 1 and independent of $Z_0$. It is elementary to check that the distribution of  $T$ has density $2(1+t)^3 \ind_{\{t>0\}}$ with respect to the Lebesgue measure. Using an elementary integration by parts, one gets for $u>0$ that:
 \[
 \E[\log(T+u)]= \log(u) - \inv{1-u} - \frac{\log(u)}{(1-u)^2}
 \quad\text{and}\quad
 \E[\log(T)]= -1. 
 \]
We recall that if $V$ has a Beta distribution with parameters $(a,b)$ such that $b>1$ then:
$\E\left[(1-V)^{-1}\right]= (a+b-1)/(b-1)$ and for $b>2$:
\[
\E[\log(V)]=\psi(a) - \psi(a+b)
\quad\text{and}\quad
\E\left[\frac{\log(V)}{(1-V)^2}\right]= \frac{(a+b-1)(a+b-2)}{(b-1)(b-2)} \Big(\psi(a ) - \psi(a+b-2)\Big),
\]
for $b=2$ and $a\in \N^*$, we have:
\[
\E\left[\frac{\log(V)}{(1-V)^2}\right]= a(a+1)  \left(\sum_{k=1}^{a-2} \inv{k^2}  - \frac{\pi^2}{6} \right) .
\]
We set:
\[
F(a,b)= (b-1) \Big(\E[\log(T+V)] - \E[\log(T)]\Big) .
\]

Recalling that $U_{(1)} < \dots < U_{(n)}$ are the order statistics of a uniform random variable, hence that $U_{(k)}$ is Beta-distributed with parameter $(n,n-k+1)$, set for $1\leq k\leq n-1$:
\[
s_k=\E[S_k]=\inv{2 \beta\theta} \frac{F(k, n-k+1)}{n-k}\cdot
\] 
By convention, we set $F(0, b)=0$. We have for $1\leq k\leq n-1$:
\[
2 \beta\theta \E[L_k]= 2 F(k, n-k+1) -  F(k+1, n-k) - F(k-1, n-k+2).
\]
Notice that if $F(a,b)$ can be written as $
\alpha + (b-1) \beta + f(a,b)$, then one gets:
\[
2 \beta\theta \E[L_k]= 2 f(k, n-k+1) -  f(k+1, n-k) -  f(k-1, n-k+2),
\]
with the convention, when $k=1$, that $f(0, b)=-\alpha- n\beta$. 
Elementary computations give, with $a+b=n+1$, $a>0$ and $b>2$:
\[
 \frac{F(a,b)}{b-1}
 =- \E[\log(T)] + \psi(a)- \psi(n+1) - \frac{n}{b-1}  - \frac{(a+ b-1)(a+b-2)}{(b-1)(b-2)} \Big( \psi(a) -  \psi(n-1)\Big),
 \]
 that is:
 \[
 F(a,b)=
-n +(b-1)\big(1 +\psi(n-1)   -  \psi(n+1)\big)   + f(a,b),
 \]
with:
\[
f(a,b)= - \frac{a(n+ b-2) }{(b-2)} \Big(\psi(a) - \psi(n-1)\Big).
\]
 Using that $\psi(k+ 1)=\psi(k)  + k^{-1}$ and, for $k\geq 2$,  $\psi(k-1)=\psi(k) - (k-1)^{-1}$, we get that for $2\leq k\leq n-3$:
 \begin{multline*}
   2 \beta\theta \E[L_k]= 
\inv{k}\left( 1 +\frac{n}{n-k-2}\right) 
 + \frac{2n}{(n-k)(n-k-2)} \\+ \frac{2n(n-1)}{(n-k)(n-k-1)(n-k-2)} \Big (\psi(k) - \psi(n-1)\Big).  
 \end{multline*}
This formula is also correct for $k=1$. This gives the result for $1\leq k\leq n-3$.

For $k=n-1$, we have:
\[
\E[\log(T+ U_{(n-1)})]
= \psi(n-1) - \psi(n+1) - n - n(n-1) \left(\sum_{k=1}^{n-2} \inv{k^2}  - \frac{\pi^2}{6} \right),
\]
and thus:
\[
2 \beta\theta s_ {n-1}= 1 + \psi(n-1) - \psi(n+1) - n + n(n-1) \sum_{k\geq n-1} \inv{k^2}\cdot
\]
Since:
\[
2 \beta \theta s_{n-2}=\frac{F(n-2, 3)}{2}=1 + \psi(n-1) - \psi(n+1) - \frac{n}{2} - \frac{(n-2)(n+1)}{2} (\psi(n-2) - \psi(n-1)),
\]
that is, $2 \beta \theta s_{n-2}=1 + \psi(n-1) - \psi(n+1) +1/2$, 
we get that:
\[
  \beta\theta \E[L_{n-1}]=  2\beta\theta  ( s_{n-1} - s_{n-2})
 =  -\inv{2}  - n + n(n-1) \sum_{k\geq n-1} \inv{k^2}
 =  \inv{2} -n + \inv{n-1} + n(n-1) \sum_{k\geq n} \inv{k^2}\cdot
\] 
Using the Euler Maclaurin expansion $ \sum_{k\geq n} \inv{k^2} =n^{-1} + (2 n^2)^{-1} + (6n^3)^{-1} + O(n^{-5})$, we get:
\[
 \beta\theta \E[L_{n-1}]=   -\inv{2}  - n + \frac{n}{n-1} + (n-1) +  \frac{n-1}{2n} + \frac{n-1}{6n^2} + O\left(\inv{n^2}\right)= \frac{2}{3n} + O\left(\inv{n^2}\right).
\]

For $k=n-2$, we have:
\begin{align*}
2 \beta \theta s_{n-3}
&=\frac{F(n-3,4)}{3}\\
&= 1 + \psi(n-1) - \psi(n+1) -\frac{n}{3} - \frac{(n-3)(n+2)}{6} \left(\psi(n-3) - \psi(n-1)\right)\\
&=1 + \psi(n-1) - \psi(n+1) -\frac{n}{3} +\frac{(n+2)(2n-5)}{6(n-2)},
\end{align*}
and thus:
\[
  \beta\theta \E[L_{n-2}]=  \beta\theta  ( 4 s_{n-2} - s_{n-1} - 3s_{n-3})
 = \frac{2}{3n} + O\left(\inv{n^2}\right).
\] 
So, for $k=n-1, n-2$, we obtain 
$\beta\theta \E[L_{n-2}]
 = \frac{2}{3n} + O\left(\inv{n^2}\right)$.
\end{proof}

\begin{lemma}[Mean of $L_k$]
  \label{lem:ELk}
 Let $n\in \N^*$ and $k\in  \llbracket
1,n-1\rrbracket$ be given. We have:
\begin{equation}
   \label{eq:ELk=}
   \E[L_k\, |\, Z_0]
=  (n-k) (2S_k-S_{k-1}- S_{k+1})
+S_{k+1}
  -  S_{k-1}   .
   \end{equation}   
\end{lemma}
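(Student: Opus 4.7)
The plan is to expand $L_k=\sum_{j=1}^{n-k+1} L_{j:\, j+k-1}$ using Lemma~\ref{lem:Lk} and compute each summand by conditioning on the rank $K_0\in\{1,\dots,n\}$ of the immortal leaf $X_0=0$ among $X_{(1)},\dots,X_{(n)}$. The first step is to rewrite $L_{j:\,\ell}$ as a signed sum of four maxima via the elementary identity
\[
 [a\wedge b - c]_+ = (a\vee c)+(b\vee c)-(a\vee b\vee c)-c.
\]
Applied to~\eqref{eq:Lk}, this yields, for instance in the case $X_{(j)}>0$, the expression $L_{j:\,\ell}=\zm_{j:\,\ell}+\zm_{j+1:\,\ell+1}-\zm_{j:\,\ell+1}-\zm_{j+1:\,\ell}$, with analogous expressions in the other two cases. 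With the convention $\zeta_0=\zeta_{n+1}=+\infty$ already in force, the corresponding terms cancel formally at the boundaries $j=1$ or $\ell=n$, reducing the formula there to a single difference of $\zm$'s.

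The second step is to compute $\E[\zm_{j:\,\ell}\,|\,Z_0,K_0=k_0]$. By the additivity property of $\zeta^*$ mentioned after~\eqref{eq:law-z*}, $\zm_{j:\,\ell}$ is distributed, conditionally on the positions, as $\zeta^*(I_j+\cdots+I_\ell)$. Setting $Y_i=X_{(i)}+\Eg\in[0,Z_0]$ (so $Y_{K_0}=\Eg$), a short case check shows that $I_j+\cdots+I_\ell$ equals $Y_{\ell+1}-Y_j$ when $k_0>\ell$, $Y_\ell-Y_{j-1}$ when $k_0<j$, and $Y_\ell-Y_j$ when $j\leq k_0\leq\ell$ (the last case being shorter because $I_{k_0}=0$). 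Two probabilistic facts then close the step. First, conditionally on $Z_0$, $K_0$ is uniform on $\{1,\dots,n\}$ and independent of $(Y_1,\dots,Y_n)$, because $\Eg/Z_0$ is uniform on $[0,1]$ and exchangeable with the $U_i$'s. Second, by exchangeability of the $n+1$ uniform spacings, $Y_\ell-Y_j\stackrel{d}{=}Z_0\,U_{(\ell-j)}$ conditionally on $Z_0$. Combined with the definition of $S_m$, this gives
\[
 \E[\zm_{j:\,\ell}\,|\,Z_0,K_0=k_0]=
 \begin{cases}S_k & \text{if } k_0\notin [j,\ell],\\ S_{k-1} & \text{if } k_0\in [j,\ell],\end{cases}
\]
where $k=\ell-j+1$.

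The third step plugs this into the four-term expressions of Step~1 and averages over the $n$ equally likely positions of $K_0$. A direct verification in each of the three cases shows that $\E[L_{j:\,\ell}\,|\,Z_0]=2S_k-S_{k+1}-S_{k-1}$ whenever $j$ is interior ($2\leq j\leq n-k$), while $\E[L_{j:\,\ell}\,|\,Z_0]=S_k-S_{k-1}$ in the two boundary configurations $j\in\{1,n-k+1\}$. Summing the $n-k+1$ contributions of $L_k$ yields
\[
 \E[L_k\,|\,Z_0] = 2(S_k-S_{k-1}) + (n-k-1)(2S_k-S_{k+1}-S_{k-1}),
\]
which rearranges to~\eqref{eq:ELk=}.

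The main difficulty lies in the bookkeeping: the three cases of~\eqref{eq:Lk} must be handled jointly with two boundary adjustments, and the uniform reduction $\E[L_{j:\,\ell}\,|\,Z_0]=2S_k-S_{k+1}-S_{k-1}$ in the interior is a small ``miracle'' that relies crucially on the uniformity of $K_0$ in $\{1,\dots,n\}$ and on the exchangeability of the uniform spacings.
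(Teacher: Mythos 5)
Your proposal is correct and follows essentially the same route as the paper: the same identity $[x\wedge y-z]_+=x\vee z+y\vee z-z-x\vee y\vee z$ combined with Lemmas~\ref{lem:ZetaMinus} and~\ref{lem:Lk}, the same identification of $\zm_{j:\,\ell}$ (via the additivity of $\zeta^*$ over the spacings) as $\zeta^*(Z_0 U_{(k)})$ or $\zeta^*(Z_0 U_{(k-1)})$ according to whether the rank of $X_0$ lies in $\llbracket j,\ell\rrbracket$, and the same bookkeeping over interior versus boundary indices $j$. Your per-$j$ conditioning on the rank $K_0$ (the paper's $J$) is merely a reorganization of the paper's three-block decomposition of the sum, and the resulting expression $2(S_k-S_{k-1})+(n-k-1)(2S_k-S_{k+1}-S_{k-1})$ agrees with~\eqref{eq:ELk=}.
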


\begin{proof}
    Since  \(x \wedge y = x + y
   - x\vee y \) and \([x-z]_+ = x\vee z -z\), we get that:
   \[
     [x \wedge y
     -z]_+= x \vee z+ y \vee z -z - x\vee y\vee z
     \quad\text{for all}\quad
     x,y,z\in \R.
   \]

   Set   $J\in\llbracket 1,n\rrbracket$ such that $X_{(J)}=0$.
   Using Lemmas~\ref{lem:ZetaMinus} and~\ref{lem:Lk}, we obtain for $k>1$ that:
   \begin{align*}
     \E[L_k\,& |\, Z_0]\\
     =  \E & \Big[\sum_{J<j\leq  n-k+1} \left(\zm_{j:\, j+k-1} + \zm_{j+1:\, j+k} \ind_{\{j+k\leq
             n\}} -
       \zm_{j+1:\, j+k-1} - \zm_{j:\, j+k} \ind_{\{j+k\leq
             n\}} \right) \, |\, Z_0\Big]\\
     &+ \E\Big[\sum_{1\leq j<J-k+1} \left(\zm_{j-1:\, j+k-2} \ind_{\{j\geq 2\}}+ \zm_{j:\, j+k-1} -
       \zm_{j:\, j+k-2} - \zm_{j-1:\, j+k-1}\ind_{\{j\geq 2\}} \right) \, |\, Z_0\Big]\\
    &+ \E\Big[\sum_{1 \vee (J-k+1)\leq j\leq  J \wedge (n-k+1)}
      \!\!\!\!\!\!\!\!\!\!
      \left(\zm_{j-1:\, j+k-1}\ind_{\{j\geq 2\}} + \zm_{j:\, j+k}\ind_{\{j+k\leq
             n\}} \right. \\
    & \left. \qquad\qquad\qquad\qquad\qquad\qquad - 
       \zm_{j:\, j+k-1} - \zm_{j-1:\, j+k}\ind_{\{j\geq 2, \, j+k\leq
             n\}} \right) \, |\, Z_0\Big]. 
   \end{align*}

Let $U_{(1)} < \cdots < U_{(n)}$ be the order statistics of $n$
independent uniform random variables on $[0, 1]$ which are also
independent of $Z_0$. We denote by $W_\ell$ the random variable
given by~\eqref{eq:law-z*} with $\delta$ replaced by $Z_0\, U_{(\ell)}$ and
$E$ independent of $Z_0, U_1, \ldots, U_n$. In particular, conditionally
on $J$ and $Z_0$, we have that $\zm_{j:\, \ell}$ is distributed as
$W_{\ell -j+1}$ if $J<j\leq  \ell\leq  n$ or $1\leq  j\leq  \ell <J$ but as $W_{\ell -j}$ if 
$1 \leq j\leq  J\leq  \ell \leq n$ and $j<\ell$ as $\zeta_J=0$. 
We thus deduce that:
   \begin{align*}
     \E[L_k\, |\, Z_0]
     =  \E & \Big[\sum_{J<j\leq  n-k+1} \left(W_k + W_k\ind_{\{j+k\leq
             n\}}  -
 W_{k-1}  - W_{k+1}\ind_{\{j+k\leq
             n\}}  \right) \, |\, Z_0\Big]\\
     &+ \E\Big[\sum_{1\leq j<J-k+1} \left( W_k +W_k\ind_{\{j\geq 2\}} -
       W_{k-1} -W_{k+1}\ind_{\{j\geq 2\}} \right) \, |\, Z_0\Big]\\
    &+ \E\Big[\sum_{1 \vee (J-k+1)\leq j\leq  J \wedge (n-k+1)}
      \!\!\!\!\!\!\!\!\!\!
      \left(W_{k}\ind_{\{j\geq 2\}} + W_{k}\ind_{\{j+k\leq
             n\}} \right. \\
    & \left. \qquad\qquad\qquad\qquad\qquad\qquad  - W_{k-1}  - W_{k+1}\ind_{\{j\geq 2,\,  j+k\leq
             n\}} \right) \, |\, Z_0\Big].
   \end{align*}
   By definition, we have
\(S_\ell=\E[W_\ell\, |\, Z_0]\) for $\ell\in \llbracket
1,n \rrbracket$. We get:
   \begin{align*}
   \E[L_k\, |\, Z_0]
&  =  2(n-k) S_k
  - (n-k+1) S_{k-1}
  - (n-k-1) S_{k+1}\\
&  =  (n-k) (2S_k-S_{k-1}- S_{k+1})
+S_{k+1}
  -  S_{k-1}   .
  \end{align*} 
It is easy to check that this formula also holds for $k=1$ as $S_0=0$. 
\end{proof}

We now compute the SFS  of  the  ancestral  tree
\(\ct_n\) of  $n\in \N^*$ individuals  sampled from  the extant population in a stationary
branching process at  a given time, say $0$ for  simplicity.   

\begin{theorem}[Site  frequency spectra of $n$ individuals at a given generation]
  \label{the:SFS_discret}
 The expected number of mutations
carried by exactly \(k\in \llbracket 1, n-1 \rrbracket \) individuals
among $n\geq 2$ individuals sampled uniformly in the
population at a fixed time for a stationary subcritical branching
process satisfies:
\[
   \frac{\beta}{\mu Z_0}\,    \E[\xi_k^{(n)}  |\, Z_0] = \inv{k} +
   \inv{k} g_1\left(\theta Z_0, 
      \frac{k}{n}\right)+  \frac{\sqrt{k} }{n^2}\, g_2\left(\theta Z_0,
      \frac{k}{n}, n\right),
  \]
where the function $g_1$ is continuous on $\R_+^*\times [0,1]$ with $g_1(z, 0)=0$ and for all $z$, there exists a constant $C$ such that  $ g_1(z,u)\leq C u (|\log(u)| +1)$ 
and $ g_2(z,u, n)\leq C$ for all $u\in [0, 1]$ and $n\geq 2$. 
In particular, if $(k_n, n\in \N^*)$ is a sequence such that $\lim_{n\rightarrow
  \infty} k_n/n=u\in [0,1]$ and \(k_n\in \llbracket 1, n-1 \rrbracket \), then we have:
\begin{equation}\label{eq:EsperanceSFS}
\lim_{n\rightarrow \infty}  k_n \, \E[\xi_{k_n}^{(n)}  |\, Z_0] =
\frac{\mu Z_0}{\beta}\left(1+g_1\left(\theta Z_0,
      u\right)\right).
\end{equation}
\end{theorem}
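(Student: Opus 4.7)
The plan is to combine Lemma~\ref{lem:Lk} with Lemma~\ref{lem:ELk} to reduce the statement to a single exact integral identity, and then perform a careful asymptotic analysis. Lemma~\ref{lem:Lk} gives $\E[\xi_k^{(n)}\mid Z_0]=\mu\,\E[L_k\mid Z_0]$ since $\xi_k^{(n)}$ is Poisson with parameter $\mu L_k$ conditionally on $\ca_n$, and Lemma~\ref{lem:ELk} expresses this in terms of $S_{k-1}, S_k, S_{k+1}$. Using the tail formula $\P(\zeta^*(\delta)>t)=1-e^{-\delta c(t)}$ from \eqref{eq:law-z*} together with Fubini, one obtains
\[
S_\ell=\E[\phi(Z_0 U_{(\ell)})\mid Z_0],\qquad \phi(u)=\int_0^\infty (1-e^{-uc(t)})\,dt,
\]
and the substitution $v=c(t)$ gives the explicit form $\phi'(u)=\beta^{-1}\int_0^\infty e^{-v}/(v+2\theta u)\,dv=\beta^{-1}e^{2\theta u}E_1(2\theta u)$, so $\phi$ is smooth on $(0,\infty)$ with $\phi'(u)\sim-\log(2\theta u)/\beta$ and $\phi''(u)\sim-1/(\beta u)$ as $u\to 0^+$.

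The crucial algebraic step is to rewrite the combination of Lemma~\ref{lem:ELk} as the telescoping sum $(n-k+1)[S_k-S_{k-1}]-(n-k-1)[S_{k+1}-S_k]$, then compute each bracketed difference via $S_\ell-S_{\ell-1}=Z_0\int_0^1 \phi'(Z_0 v)\,\P(U_{(\ell-1)}\le v\le U_{(\ell)})\,dv$ using the classical identity $\P(U_{(\ell-1)}\le v\le U_{(\ell)})=\binom{n}{\ell-1}v^{\ell-1}(1-v)^{n-\ell+1}$. Combining the two integrals using $(n-k+1)\binom{n}{k-1}=k\binom{n}{k}$ yields the key exact identity
\[
\E[L_k\mid Z_0]=\frac{Z_0}{k}\,\E\!\left[\phi'(Z_0 U_{(k)})\,\bigl(k-(n-1)U_{(k)}\bigr)\,\Big|\,Z_0\right],
\]
where $U_{(k)}\sim\mathrm{Beta}(k,n-k+1)$. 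This formula is linear in $\phi'$ (rather than involving second finite differences) and is the cornerstone of the analysis.

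The asymptotic extraction proceeds by writing $U_{(k)}=\mu_k+X$ with $\mu_k=k/(n+1)=\E[U_{(k)}]$ and $\E[X^2]=\mu_k(1-\mu_k)/(n+2)$, and Taylor-expanding $\phi'(Z_0 U_{(k)})$ around $Z_0\mu_k$. Since $k-(n-1)U_{(k)}=2k/(n+1)-(n-1)X$, the cross term $-(n-1)\E[\phi'(Z_0\mu_k+\cdots)X]$ produces the factor $Z_0\mu_k(1-\mu_k)\phi''(Z_0\mu_k)(n-1)/(n+2)$, and one ends up with
\[
\frac{\beta k}{Z_0}\,\E[L_k\mid Z_0]=\beta\bigl[2u\phi'(Z_0 u)-Z_0 u(1-u)\phi''(Z_0 u)\bigr]+\frac{\sqrt{k}}{n^2}g_2(\theta Z_0,k/n,n),
\]
with $u=k/n$. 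Defining $g_1(\theta Z_0,u):=\beta[2u\phi'(Z_0 u)-Z_0 u(1-u)\phi''(Z_0 u)]-1$, the asymptotics $x\phi''(x)\to-1/\beta$ and $x\phi'(x)\to 0$ as $x\to 0^+$ give $g_1(z,0)=0$ (which recovers the Kingman-like $1/(\beta k)$ leading term) and the claimed bound $g_1(z,u)\le Cu(|\log u|+1)$ after elementary estimates on $E_1$.

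The main obstacle is the uniform remainder bound $|g_2|\le C$, valid over all $k\in\llbracket 1,n-1\rrbracket$ and $n\ge 2$. The hard regime is $u$ near $0$, where $\phi'$ and $\phi''$ are logarithmically singular, so the Taylor remainder must be handled in integral form using the explicit control $|x^2\phi'''(x)|\le C$ rather than by supremum bounds. The $\sqrt{k}/n^2$ scaling (rather than $k/n^2$) is due to $L^2$-cancellations between the centred Taylor contributions combined with the sharper bound $\E[|X|^3]=O(n^{-2})$ for the Beta distribution. The edge cases $k=1$ and $k=n-1$ require separate verification since then the asymmetry of the two order-statistic spacings is maximal. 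Once the bounds are established, the limit~\eqref{eq:EsperanceSFS} follows by continuity of $g_1$ in $u$ together with the uniform bound on $g_2$.
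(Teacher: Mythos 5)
Your algebraic core is correct and is a genuinely different route from the paper's. Writing $S_\ell=\E[\phi(Z_0U_{(\ell)})\mid Z_0]$ with $\phi(\delta)=\E[\zeta^*(\delta)]$, telescoping Lemma~\ref{lem:ELk} as $(n-k+1)(S_k-S_{k-1})-(n-k-1)(S_{k+1}-S_k)$, and using $\P(U_{(\ell-1)}\le v\le U_{(\ell)})=\binom{n}{\ell-1}v^{\ell-1}(1-v)^{n-\ell+1}$ together with $(n-k+1)\binom{n}{k-1}=k\binom{n}{k}$ does yield the exact identity
\[
\E[L_k\mid Z_0]=\frac{Z_0}{k}\,\E\Bigl[\phi'(Z_0U_{(k)})\bigl(k-(n-1)U_{(k)}\bigr)\Bigm|Z_0\Bigr],
\qquad U_{(k)}\sim\mathrm{Beta}(k,n-k+1),
\]
which is linear in $\phi'$, whereas the paper manipulates second finite differences of $S_\ell$ through explicit Beta-moment and digamma formulas. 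This identity is a real gain in transparency, and for any fixed $u\in(0,1)$ (so $k_n\to\infty$ and $\phi'$ is smooth near $Z_0u$) it does deliver the limit~\eqref{eq:EsperanceSFS} with $g_1(z,u)=2\beta u\,\phi'(zu/\theta)-\beta (z/\theta) u(1-u)\phi''(zu/\theta)-1$.

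The gap is in the uniform error bound, i.e.\ precisely in the regime of small $k$. You Taylor-expand $\phi'(Z_0U_{(k)})$ around $Z_0\mu_k$ with $\mu_k=k/(n+1)$ and control the remainder via $|x^2\phi'''(x)|\le C$. But $\phi'(x)\sim-\log(2\theta x)/\beta$ and $\phi''(x)\sim-1/(\beta x)$ near $0$, and for $k=O(1)$ the fluctuations of $U_{(k)}$ are of the same order as its mean: the expansion is effectively in powers of $k^{-1/2}$, not of $n^{-1}$, so the integral-form remainder is only $O(\E[X^2]/\mu_k^2)=O(1/k)$. Concretely, $\E[\log U_{(k)}]-\log\mu_k=\Psi(k)-\log k+O(1/n)=-1/(2k)+O(1/k^2)+O(1/n)$ — a quantity depending on $k$ itself rather than on $u=k/n$ — and already $\E[\phi'(Z_0U_{(1)})]-\phi'(Z_0\mu_1)=\gamma/\beta+O(\log n/n)$, an order-one discrepancy your bounds cannot detect. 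After multiplication by the prefactors $2k/(n+1)$ and $(n-1)$, such terms contribute at order $1/n$ to $\beta k\,\E[L_k\mid Z_0]/Z_0$; they can neither be absorbed into $g_1(\theta Z_0,k/n)$ nor hidden in the permitted $k^{3/2}/n^2$ when $k\ll n^{2/3}$, and whether they cancel between the two halves of your identity is exactly the delicate point your plan leaves open. The paper's resolution is to split $\E[\zeta^*(\delta)]$ into the explicit singular part $-\delta(1+\theta\delta+\tfrac23\theta^2\delta^2)\log(2\theta\delta)+\cdots$, whose Beta-moments $\E[U_{(k)}^{j}\log U_{(k)}]$ are evaluated \emph{exactly} through digamma identities, plus a remainder $h_1$ with bounded second and third derivatives near $0$, and to Taylor-expand only $h_1$; you would need the same device (or exact evaluations of $\E[\phi'(Z_0U_{(k)})]$ and $\E[U_{(k)}\phi'(Z_0U_{(k)})]$) to pin down the $O(u)$ constants in $g_1$ and to justify $g_2=O(1)$. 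Two smaller points: your displayed remainder $\sqrt{k}\,g_2/n^2$ for $\beta k\,\E[L_k\mid Z_0]/Z_0$ is stronger by a factor of $k$ than what the theorem asserts (which is $k^{3/2}g_2/n^2$ at that normalization) and is not attainable; and $\E[|X|^3]\asymp k^{3/2}(1-u)^{3/2}/n^{3}$ is $O(n^{-2})$ only for $k\lesssim n^{2/3}$.
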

The function $g_1$ is explicitly given in \eqref{def:g1} and drawn in Fig. \ref{fig:g1_plot} for various values of $z$. Note that $g_1(z,u)=2(z-1)u \log(u) + O(u)$ for small $u$, hence $g_1$ is not differentiable at $u=0$ except for the singular value $z=1$, which corresponds to the case in which $Z_0$ is equal to its mean $\E[Z_0]=1/\theta$. In particular, for $\lim_{n\rightarrow
  \infty} k_n/n=u\in [0,1]$ and $u$
small, we get:
\begin{equation}
    \label{eq:DL-gn}
 \lim_{n\rightarrow \infty}  k_n \, \E[\xi_{k_n}^{(n)}  |\, Z_0] \simeq 
\frac{\mu Z_0}{\beta}
\big(1+ 2(\theta Z_0-1) u\log(u)\big).
\end{equation}

\begin{figure}
    \centering
    \includegraphics[width=.7\textwidth]{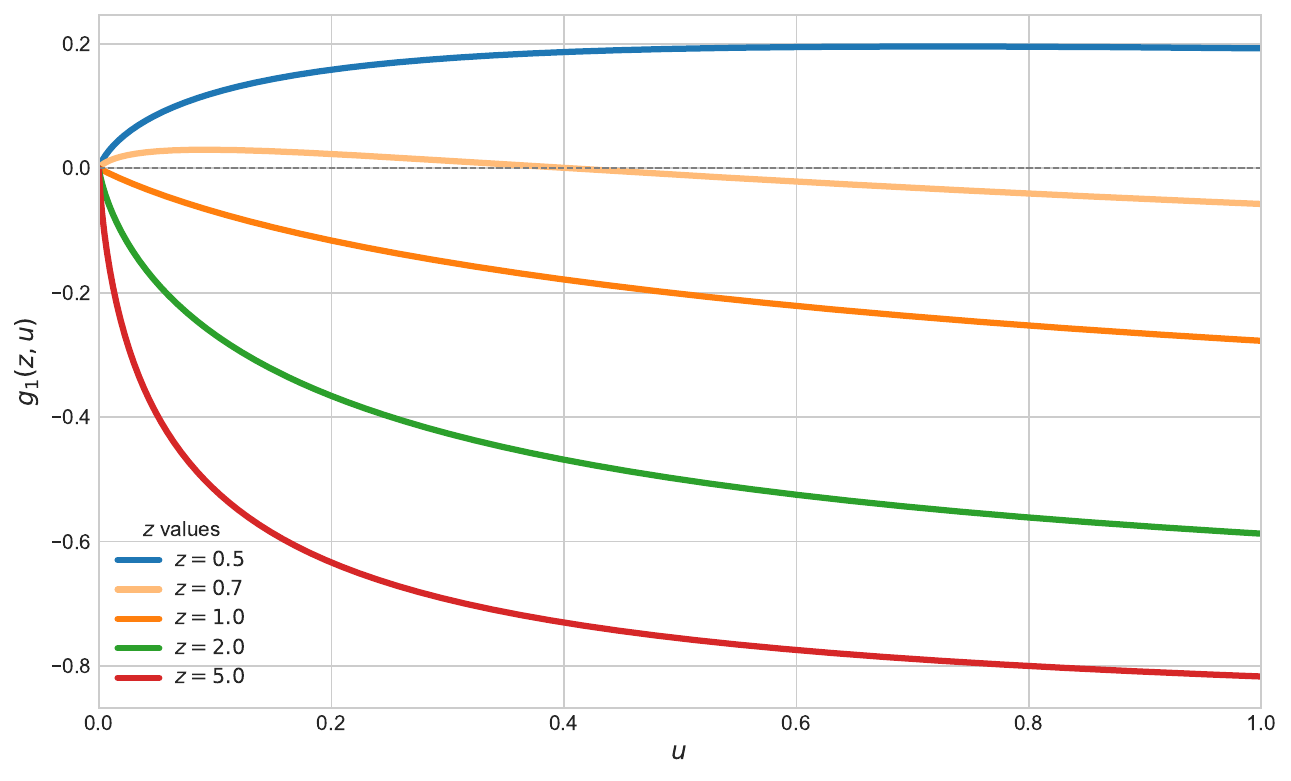}
    \caption{Plot of $g_1(z,u)$ for various values of $z$.}
    \label{fig:g1_plot}
\end{figure}

\begin{proof}
  Thanks  to Lemma~\ref{lem:sampling},  we recast  the problem  using a
   point measure  \(\mathcal{M}\) on the random  tree \(\fT _n\)
   (associated with the ancestral  point measure $\ca_n$) with intensity
   $\mu$   times    the   length   measure   on    its   branches.   Let
   $J\in  \llbracket  1,n\rrbracket $  such  that  $X_{(J)}=0$. We  also
   recall the  definition~\eqref{eq:def-zeta-m} of  $\zm_{j:\,\ell}$ and
   that $\zeta_{J}=0$. 


   Let   $k\in    \llbracket   1,n-1\rrbracket$.    The  number   of
   $k$-admissible mutations carried by \(\fT _n\) is Poisson distributed
   with mean $\mu L_k$ given  in~\eqref{eq:def-L}.  Recall that $S_k$ is
   distributed as  $\E[\zeta^*(Z_0 U_{(k)})|Z_0]$ with $U_{(k)}$  the $k$-th
   order    statistic    of    $n$   independent    random    variables
   $U_1, \ldots,U_n$ uniformly distributed over $[0,1]$ also independent
   of   $Z_0$,   and   $(Z_0,    U_{(k)})$   is   independent   of   $E$
   in~\eqref{eq:law-z*}.     We     also    recall     formula    (5.15)
   from~\cite{abrahamExactSimulationGenealogical2021}:
\[
  \E\left[\zeta^*(\delta) \right]= (\delta/\beta) \, H(2\theta \delta)
  \quad\text{with}\quad
  H(x)=\int_0^\infty  \frac{1- \expp{-u}}{u}\, \frac{\rd u}{u+x}\cdot
\]



Let  $\gamma$ be the Euler constant. Using that:
\[
  1-\gamma= \int_0^1 \left(1- \expp{-u} -u\right)\, \frac{\rd u}{u^2} +
  \int_1^\infty  \left(1- \expp{-u} \right)\, \frac{\rd u}{u^2}, 
\]
and elementary computations, we get that:
\begin{align*}
  H(x)
&  = \int_0^1 \left(1- \expp{-u}  - u + \frac{u^2}{2} -
     \frac{u^3}{6}\right) \left( \inv{u+x} -\inv{u}\right)\, \frac{\rd
     u}{u}
     + \int_1^\infty  (1- \expp{-u})   \left( \inv{u+x} -\inv{u}\right)\, \frac{\rd
     u}{u}\\
 &\hspace{2cm}  + \int_0^1 \left(1 - \frac{u}{2} + \frac{u^2}{6}\right) \frac{\rd
    u}{u+x}
    + \int _0^1 \left(\inv{2} - \frac{u}{6}\right) \, \rd u + 1-\gamma\\
&=  h_0(x)- \left(1+ \frac{x}{2}+ \frac{x^2}{6}\right) \log(x)-
  \frac{x}{6} + 1-\gamma ,
\end{align*}
where:
\begin{equation}
   \label{eq:def-h0}
  h_0(x)= \left(1 + \frac{x}{2} + \frac{x^2}{6}\right)\log(1+x) -
  \int_{(0, \infty )} f(u) \,  \frac{x\, \rd u}{u+x},
\end{equation}
and:
\begin{equation}
   \label{eq:def-f}
  f(u)=\inv{u^2} \left(1- \expp{-u} -u+
    \frac{u^2}{2}-  \frac{u^3}{6}\right) \ind_{\{u\leq 1\}} 
  +\inv{u^2} \left(1- \expp{-u}\right) 
  \ind_{\{u>1\}}.
\end{equation}
This decomposition is motivated by the fact that $ f(u)/u^2$ is
integrable. So we get:
\[
 \beta  \E\left[\zeta^*(\delta) \right]= - \delta \left( 1+\theta
 \delta + \frac{2}{3} \theta^2 \delta^2\right)  
\log(2\theta\delta)
+ (1- \gamma) \delta
  - \frac{ \theta}{3} \delta^2
  + \delta  h_0(2\theta\delta).
\]
Let $\log_+(x)=\max(0, \log(x))$. For simplicity, we set:
\begin{equation}
   \label{eq:def-h1}
  h_1(x)=xh_0(x),
\end{equation}
and get  the following bounds on the
derivatives of $h_1$: there exists a
finite constant $C$ such that for $x\geq  0$:
\begin{equation}
   \label{eq:majo-h0}
  |  h_1^{(i)}(x)|\leq   C(1 +x^{3-i}\log_+(x))
  \quad\text{and}\quad
  i\in\{0, \ldots, 3\}.
\end{equation}


 Now, in the computation of $S_k$, the random variable $U_{(k)}$,  which is independent of $Z_0$, has
 a   Beta    distribution   with   parameter   $(k,n-k+1)$. We recall
 that  if $V$ has
 a   Beta    distribution   with   parameter   $(a,b)$ and $i\in \N$:
  \begin{align*}
    \E[V]&=\frac{a}{a+b},\\
    \E[V^2]
               &=\frac{a(a+1)}{(a+b)(a+b+1)},\\
    \E[V^{i+1} \log(V) ]
               &= \frac{a\cdots (a+i)}{(a+b)\cdots
                 (a+b+i)}\big(\Psi(a+i+1)-\Psi(a+b+i+1)\big) .
  \end{align*}
The formula for the geometric mean $\Es[\log(V)]$ is well-known (see e.g. (25.17)' in \cite{johnson1995continuous}) and the general formula for $\Es[V^{i+1}\log(V)]$ can be derived using the fact that $\Es[V^{i+1}\log(V)]=\Es[\log(V')]\mathrm{B}(a+i+1,b)/\mathrm{B}(a,b)$, where $V'$ is $\mathrm{Beta}(a+i+1,b)$-distributed. Let us mention also that $\Psi(\ell+1)=H_{\ell} - \gamma$, with 
$H_0=0 $ and \(H_\ell=\sum_{i=1}^\ell i^{-1}\) the harmonic sum for $\ell\in \N^*$
and $\gamma$ the Euler constant.

We set:
\begin{align*}
  B_{n,k}&=\Psi(n+2) -\Psi(k+1)  +  1-\gamma -  \log(2\theta Z_0),\\
  C_{n,k}&=\Psi(n+3) -\Psi(k+2)   - \inv{3} - \log(2\theta Z_0),\\
  D_{n,k}&= \Psi(n+4) -\Psi(k+3)   -  \log(2\theta Z_0)
\end{align*}
and 
\[
     A_{n,k}=\frac{k}{(n+1)} B_{n,k} + \theta Z_0\, \frac{
       k(k+1)}{(n+1)(n+2)} C_{n,k}+ \frac{2\theta^2 Z_0^2}{3} 
     \frac{k(k+1)(k+2)}{(n+1)(n+2)(n+3)} 
     D_{n,k},
\]
as well as:
\[
  F_{n,k}=\E\left[U_{(k)}\,  h_0(2 \theta Z_0\, U_{(k)})\,|\,
    Z_0\right]=\inv{2 \theta Z_0} \E\left[ h_1(2 \theta Z_0\, U_{(k)})\,|\, Z_0\right]. 
\]
In particular, we have:
\begin{equation}
   \label{eq:S=A+F}
\frac{\beta}{Z_0}\,    S_k=   A_{n,k}+
  \,F_{n,k}.
\end{equation}
By convention we set $U_{(0)}=0$ so that the formula~\eqref{eq:S=A+F}
also holds for $k=0$ as by convention  $S_0=0$. 
Recall $k\in    \llbracket   1,n-1\rrbracket$. Using~\eqref{eq:Psi}, we also get:
 \begin{align*}
\frac{\beta}{Z_0}\,    S_{k-1} &=    A_{n,k}+ F_{n,k-1} -
             \frac{1}{(n+1)}\left(B_{n,k} -1 
             +\inv{k}\right)  \\
   & \hspace{2cm}-  \theta Z_0
             \frac{k}{(n+1)(n+2)} \left(2 C_{n,k} - \frac{k-1}{k+1}\right)\\
      & \hspace{2cm} - \frac{2 \theta^2 Z_0^2}{3}\,
        \frac{k(k+1)}{(n+1)(n+2)(n+3)} \left( 3D_{n,k} - 
        \frac{k-1}{k+2}\right), 
   \\
   \frac{\beta}{Z_0}\,     S_{k+1}
                               &=   A_{n,k} + F_{n,k+1} 
  + \frac{1}{(n+1)}(B_{n,k}
   -1) \\
      & \hspace{2cm} +  \theta Z_0
        \frac{k+1}{(n+1)(n+2)} (2 C_ {n,k}- 1)     \\
   & \hspace{2cm} + \frac{2 \theta^2 Z_0^2}{3}\,
     \frac{(k+1)(k+2)}{(n+1)(n+2)(n+3)} \left( 3D_{n,k}-1 \right). 
 \end{align*}
So, using Lemma \ref{lem:ELk}, for $k\in  \llbracket 1,n-1\rrbracket$, we have:
\begin{multline}
  \label{eq:synthese}
    \frac{\beta}{Z_0}\,   \E[L_k\, |\, Z_0] =    \inv{k}+  
  \frac{B_{n,k}^{(1)}}{(n+1)}
     +  \theta Z_0\,  \frac{C_{n,k}^{(1)}}{(n+1)(n+2)}
\\
+\frac{2 \theta^2 Z_0^2}{3}\frac{(k+1)D_{n,k}^{(1)} }{(n+1)(n+2)(n+3)}
+    \left(R^{(1)}_{n,k}+  (n-k)R^{(2)}_{n,k} \right).
 \end{multline} 
 with:
 \begin{align*}
   B_{n,k}^{(1)}&=  2B_{n,k} -3,\\
   C_{n,k}^{(1)}&= 2(-n+3k+1) C_{n,k} +
                  \frac{n(3k+1) - (5k ^2 +2k +1)}{k+1} ,\\
D_{n,k}^{(1)} &=  6 (-n+2k+1)
  D_{n,k} + \frac{1}{k+2} ((n-k)(5k+4) - (2k^2+ 3k +4)),\\
  R^{(1)}_{n,k}&= F_{n,k+1} -F_{n,k-1},\\
   R^{(2)}_{n,k}&= \big(2F_{n,k} - F_{n,k-1} -F_{n,k+1}\big).
 \end{align*}
So we get with $u=k/n\in (0,1)$:
\begin{align*}
  \frac{B_{n,k}^{(1)}}{(n+1)}
  &= \frac{u}{k} \left(
  -2\log(u) -1 -2\gamma - 2\log(2\theta 
      Z_0)\right) + O\left(\frac{u}{k^2} \right),\\
  \frac{C_{n,k}^{(1)}}{(n+1)(n+2)}
  &=\frac{u}{k} \left(2(1-3u) \left(\log(u) 
+ \log(2\theta Z_0)\right) +\frac{11}{3} - 7 u
    \right) + O\left(\frac{u}{k^2} \right),\\
  \frac{(k+1) D_{n,k}^{(1)}}{(n+1)(n+2)(n+3)}
  &=\frac{u^2}{k} \left(6(1-2u )
    \left(\log(u) + \log(2\theta Z_0)\right) + 5 
    - 7u 
    \right) +  O\left(\frac{u}{k^2} \right),
\end{align*}
where $O\left(u/k^2 \right)$  has to be understood  as a function
of $\theta$, $Z_0$, $n$, and $k$ which  is bounded by $C/nk$, with $C$ a
constant depending only on $\theta$ and $Z_0$. We first consider the term $R^{(1)}_{n,k}$:
\[
2\theta Z_0\,   R_{n,k}^{(1)}=\E\left[h_1(2 \theta Z_0\, (\Delta+ U_{(k-1)})) - h_1(2
    \theta Z_0\, ( U_{(k-1)}))) \, |\, Z_0
  \right],
\]
where $\Delta=U_{(k+1)} -U_{(k-1)}$ is distributed as $U_{(2)}$.
Recall $u=k/n$, and notice that:
\begin{equation}
  \label{eq:EU-u}
   \E\left[(U_{(k-1)}
-u) ^2\right] = \frac{u(1-u)}{n} + O(n^{-2}). 
\end{equation}
Notice that~\eqref{eq:EU-u} holds indeed for $k=1$ as by convention
$U_{(0)}=0$ and the left hand-side of~\eqref{eq:EU-u} is equal to $O(1/n^2)$. 
Since:
 \[
h_1(\delta+x) -h_1(x)= \delta h_1'(x)+ \int_0^\delta (\delta-t)
h''_1(t+x)\, \rd t,
\]
and, thanks to~\eqref{eq:majo-h0} for the control of the second
derivative of $h_1$:
\[
  |h_1'(2 \theta Z_0 \, U_{(k-1)}) - h_1'(2 \theta Z_0 u)|
  \leq  C  (1+\theta Z_0)^3  |U_{(k-1)} -u|
\]
we deduce, using the Cauchy-Schwarz  inequality
and~\eqref{eq:EU-u},  that:
\begin{align} \label{eq:R_nk1}
  R_{n,k}^{(1)}
  &= 
     \E[ \Delta]h'_1(2 \theta Z_0\, u) +  \theta Z_0\, \E[
    \Delta^2]^{1/2} \,  \E\left[(U_{(k-1)}
-u) ^2\right]^{1/2} \, 
    O\left(1\right) 
  + O\left(\E[\Delta^2] \right)\\
&  = \frac{2}{k} \,u h'_1\left(2 \theta Z_0\,u\right) +
  O\left(\frac{u^2}{k^{3/2}}\right).\nonumber
\end{align}
As above, in this equation, the $O(r)$ term should be understood as bounded by $Cr$, where the constant $C$ 
depends only on $\theta$ and $Z_0$. We now control the term $ R_{n,k}^{(2)}$. We have:
\begin{equation}
   \label{eq:DL-h0}
  2 h_1(x+ \delta) - h_1(x) - h_1(x+\delta+\delta')= (\delta-\delta')
h_1'(x) + H(x, \delta, \delta'),
\end{equation}
with:
\begin{multline}
   \label{eq:def-Hdd}
   H(x, \delta, \delta')\\
   \begin{aligned}
&=  2 \int_0^{\delta} (\delta-t) \, h_1''(x+t)\,  \rd t\ -
 \int_0^{\delta+\delta'} (\delta +\delta'-t) \, h_1''(t)\,  \rd t\\
 &    = \left(\delta^2 - \frac{(\delta+
    \delta')^2}{2}\right) h_1''(x) + \int_0 ^\delta\!\!\! (\delta -t)^2 h_1'''
   (t+x)\, \rd t -  \inv{2} \int_0 ^{\delta+ \delta'} \!\!\!\!\!\!\!\! (\delta -t)^2 h_1'''
  (t+x)\, \rd t .
   \end{aligned}
\end{multline}
Take $X=2\theta Z_0 U_{(k-1)}$, $\delta=2\theta Z_0 \Delta$ and
$\delta'=2\theta Z_0 \Delta'$ with $\Delta=U_{(k)} -U_{(k-1)}$ and 
$\Delta'=U_{(k+1)} -U_{(k)}$. Notice that $\Delta$ and $\Delta'$ are
distributed as $U_{(1)}$ and that $(\Delta, U_{(k-1)})$ and $(\Delta',
U_{(k-1)})$ have the same distribution. This implies that:
\[
  \E[ (\Delta -\Delta') h'_1( X)\, |\,  Z_0]=0,
\]
and thus:
\begin{equation}
  \label{eq:R2-v2}
  2\theta Z_0\,     R_{n,k}^{(2)}
= \E[2h_1(X+\delta) - h_1(X) -h_1(X+\delta+\delta')]
  = \E[H(X, \delta, \delta')\,|\, Z_0] .
  \end{equation}
We also have, thanks to~\eqref{eq:majo-h0}:
  \[
  |h_1''(2 \theta Z_0 \, U_{(k-1)}) - h_1''(2 \theta Z_0 u)|
  \leq  C  (1+\theta Z_0)^2  |U_{(k-1)} -u|.
\]

Using~\eqref{eq:R2-v2}, \eqref{eq:EU-u} and that $\Delta+\Delta'$ is distributed as $U_{(2)}$, we obtain similarly that:
\begin{align*}
  R_{n,k}^{(2)}
  &= 2 \theta Z_0  \left(\E[\Delta^2] - \frac{\E[(\Delta+\Delta')^2]}{2}  \right) 
    h''_1(2 \theta Z_0\, u)
    +  \inv{n} O\left(\frac{u^2}{k^{3/2}}\right)  \\
 &= - 2 \theta Z_0 \frac{ u }{nk} \, 
    h''_1\left(2 \theta Z_0\, u\right)
    +  \inv{n} O\left(\frac{u^2}{k^{3/2}}\right) .
\end{align*}
We thus obtain that:
\begin{equation}
   \label{eq:R-final}
 R^{(1)}_{n,k}+  (n-k)R^{(2)}_{n,k} =   \frac{2}{k}
 \,u h'_1\left(2 \theta Z_0\,u\right) -
  \frac{2\theta Z_0}{k}  u(1-u)  \, 
    h''_1\left(2 \theta Z_0\, u\right)
 + O\left(\frac{u^2}{k^{3/2}}\right).
\end{equation}

\medskip

In conclusion, we get that for $k\in  \llbracket
1,n-1\rrbracket$:
\begin{equation}
   \label{eq:compil}
  \frac{\beta k}{Z_0}\,   \E[L_k\, |\, Z_0]
= 1 + g_1(\theta Z_0, u)
+ O\left(\frac{u^2}{k^{1/2}}\right)
= 1 + g_1(\theta Z_0, u)
 + \frac{k^{3/2}}{n^2}\, O\left(1\right),
\end{equation}
with $g_1$ given for $u\in [0, 1]$ and $z>0$ by:
\begin{align}
  \label{def:g1}
  g_1(z, u)
  =&
u  \left(
  -2\log(u) -1 -2\gamma - 2\log(2z)\right) \\
  \nonumber
   &+    z u
 \left(2(1-3u) \left(\log(u) 
+ \log(2z)\right) +\frac{11}{3} - 7 u
\right)\\
  \nonumber
 &+\frac{2}{3} \,z^2 u^2 \left(6(1-2u )
    \left(\log(u) + \log(2z)\right) + 5 
    - 7u 
  \right)\\
  \nonumber
   &+    2
 u \, h'_1\left(2z u\right) -
2 z u(1-u)  \, 
    h''_1\left(2z u\right),
\end{align}
where $h_1$ is defined in~\eqref{eq:def-h1} through $h_0$
from~\eqref{eq:def-h0} and $f$ from~\eqref{eq:def-f}. Thanks to~\eqref{eq:majo-h0}, we get that $g$ is continuous on
$\R_+^* \times [0,1]$, that 
$g_1(z,0)=0$ and  that for all $z>0$,
there exists a constant $C$ such 
that $g_1(z, u) \leq C u(|\log(u)|+1)$.
Set $g_2(z,u,n)$ as $n^2/k^{3/2}$ times the very last right-hand side term of~\eqref{eq:compil}
so that $  g_2(z,u,n)=O(1)$. Then, use  Lemma~\ref{lem:Lk}, to get  $ \E[\xi_k^{(n)}  |\, Z_0] = \mu \E[L_k\, |\, Z_0] $.

Note that for $k=n-1$, formula \eqref{eq:Lk} reduces to $\E[L_{n-1}|Z_0] = 2(S_{n-1}-S_{n-2})$. The derivations above are still valid in that case, except for $R_{n,n-1}^{(1)}=2(F_{n,n-1}-F_{n,n-2})$ and $R_{n,n-1}^{(2)}=0$. The same computations (with $\Delta=U_{(n-1)}-U_{(n-2)}$ distributed as $U_{(1)}$) give the same asymptotic \eqref{eq:R_nk1} for $R_{n,n-1}^{(1)}$. Since for $u=1$, the second derivative term $u(1-u)h_1''(2\theta Z_0 u)$ vanishes in \eqref{eq:R-final}, this enables us to recover the asymptotic \eqref{eq:EsperanceSFS}.
\end{proof}

\section{Continuous Frequency Spectrum}\label{sec:ContinuousSFS}

In this section,  we will consider the continuous  frequency spectrum of
the genealogical  tree $\cti$  associated to  the stationary  CB process
$\bZ$. We consider a (neutral) mutation process given by 
 a Poisson point process on $\cm=\sum_{i\in I_M} \delta_{m_i} $ on \(\cti\) with 
intensity \(\mu\length(\rd x)\), where \(\mu>0\) is the individual mutation
rate and $\length (\rd x)$ the length measure on $\cti$. 

The total offspring subtree of
 \(x\in \cti\) is defined by $\cti(x)=\{y\in \cti\, \colon\, x \preceq
 y\}$ and the corresponding clonal sub-tree is defined by:
\begin{equation}
   \label{eq:clonal-tree}
    \ctcl(x)=\{y\in \cti(x)\, \colon\, \cm(\llbracket x,y
   \rrbracket)=0\}.
\end{equation} 
In the following sections we shall study the mean measure of 
the size of the 
population at time 0 carrying at least one mutation and the size of the
clonal
population at time $0$ of the MRCA of  the  extant
population at time 0.

\subsection{The mean site frequency measure}
 
Recall that for $t\in \R$, $\cz_t(\cti)$ is the local time at level $t$ of the infinite tree representing the 
genealogy of the stationary CB process $\bZ$. In particular, at a given level $t$, $\cz_t(\cti)$ is distributed 
as $Z_t$, which in turn is distributed as the sum of two independent exponential random variables with 
parameter $2\theta$. Following 
\cite{Duchamps2018}, we consider the  site frequency point measure on
$(0, +\infty )$  of the
 extant population at time $0$:
\begin{equation}\label{eq:MesuresFrequence}
  \Pt = \sum_{i\in I_M} \delta_{\cz_0(\cti(m_i))}.
\end{equation}
In other words, we associate to each mutation on the tree the size of the 
population at time 0 carrying it.
The main result of this section describes the mean measures \(\Lambda\) 
 of this point measure:
\begin{equation}
 \Lambda(\rd r) = \Es[\Pt(\rd r)].
\end{equation}
Let  \(\Gamma(0,r)=\int _r^\infty  v^{-1}\expp{-v}\, \rd v\) denote the incomplete Gamma function.
\begin{theorem}[The mean SFS measure] \label{the:meanSFS}
  The mean site frequency measure \(\Lambda\) of the genealogical tree
  $\cti $ (associated to the  stationary CB process $\bZ$) is absolutely
  continuous with respect to the Lebesgue measure on \(\mathbb{R}_+\), with
  density given by:
 \begin{equation}\label{eq:DensityLambdat}
 f(r) = \frac{\mu}{\beta} \left(\frac{\expp{-2\theta r}}{\theta r} + 
 \expp{-2\theta r}+2\theta r \, \Gamma(0,2\theta r)\right). 
 \end{equation}
\end{theorem}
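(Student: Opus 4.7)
The plan is to compute $\Lambda$ via Campbell's formula applied to the Poisson point process of mutations $\cm$ on $\cti$, writing
\[
\int \varphi\, \rd\Lambda \;=\; \mu\, \E\Big[\int_{\cti} \varphi(\cz_0(\cti(x)))\, \length(\rd x)\Big]
\]
for any nonnegative measurable $\varphi$, and then decomposing this integral according to the structure of $\cti$ as the infinite spine $\R$ together with the Poisson point process of subtrees $(\ct_i, i\in I)$ at heights $(h_i)$ with intensity $2\beta\,\rd h\,\rN[\rd\bt]$. Only mutations at heights $h\leq 0$ contribute on $(0, \infty)$, since mutations above level $0$ have no descendant at time $0$.

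For mutations at height $h\leq 0$ on the spine, I would first identify the descendant mass $\cz_0(\cti(x))=W^{(h)}:=\sum_{h\leq h_i\leq 0,\, h_i+\zeta_i\geq 0} Y^i_{-h_i}$ as distributed, for $t=-h$, like the subcritical continuous-state branching process with immigration at rate $2\beta$ and mechanism $\psi_\theta$ started at $0$ and evaluated at time $t$. A direct computation from $u(t,\lambda)=\rN[1-\expp{-\lambda Y_t}]$ yields $\E[\expp{-\lambda W_t}]=(\gamma(t)/(\gamma(t)+\lambda))^2$ with $\gamma(t)=2\theta\expp{2\beta\theta t}/(\expp{2\beta\theta t}-1)$, identifying the density of $W_t$ on $(0,\infty)$ as $\gamma(t)^2 r\expp{-\gamma(t) r}$.

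For mutations in the grafted trees, I would apply Campbell to the Poisson point process $\cn$ of grafts and the change of variable $t=-h$ to reduce the contribution to $2\beta\mu\int_0^\infty \rN\big[\int_\bt \varphi(\cy_t(\bt(x)))\,\length(\rd x)\big]\,\rd t$. The inner integral would be evaluated via a Bismut-type spine decomposition for the length measure on a Brownian tree under $\rN$: pulling out the height $a\in[0,t]$ of the mutation, the subtree above is distributed as an independent canonical tree under $\rN$, so that $\cy_{t-a}(\bt(x))$ has on $(0,\infty)$ the explicit density $c(t-a)\gamma(t-a)\expp{-\gamma(t-a) r}\,\rd r$ with $c(s)=2\theta/(\expp{2\beta\theta s}-1)$.

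I would then combine the two contributions and perform the change of variable $s=\gamma(t)$, for which $s$ ranges over $[2\theta,\infty)$ and $\rd t=-\rd s/(\beta s(s-2\theta))$, reducing $f(r)$ to elementary integrals of the form $\int_{2\theta}^\infty (s/(s-2\theta))\expp{-sr}\,\rd s$. The partial-fraction identity $s/(s-2\theta)=1+2\theta/(s-2\theta)$ followed by the substitution $u=s-2\theta$ splits these into three pieces producing respectively the terms $\expp{-2\theta r}/r$, $\expp{-2\theta r}$, and $2\theta r\,\Gamma(0,2\theta r)$ of the claimed density. The main obstacle I anticipate is to state and use precisely the Bismut-type decomposition for the length measure under $\rN$ in the Brownian case, and to organize the two conditionally convergent contributions (from the spine and from the grafted trees) carefully so that their combination assembles into the finite density $f$.
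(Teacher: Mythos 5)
Your overall architecture --- Campbell's formula, then splitting the length measure of $\cti$ into the infinite spine and the grafted trees, with the spine descendant mass at time $0$ identified through its Laplace transform --- is essentially the paper's. Your identification of the law of $W_t=\cz_0(\cti(-t))$ as Gamma$(2,\gamma(t))$ is correct: it coincides with the size-biased law of $Y_t$ under $\rN$, i.e.\ with $Z^\mathrm{Kesten}_t$ of \eqref{eq:EZK}, and your grafted-tree term (which the paper instead organizes through the ancestor count $N_t$ and \eqref{eq:ENt}, rather than a Bismut decomposition) would come out to $\mu\expp{-2\theta r}/(\beta\theta r)$ either way.

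The genuine gap is in the spine contribution. As you set it up, its density is
\[
\mu\int_0^\infty \gamma(t)^2\, r\,\expp{-\gamma(t) r}\,\rd t
=\frac{\mu r}{\beta}\int_{2\theta}^{\infty}\frac{s}{s-2\theta}\,\expp{-s r}\,\rd s ,
\]
and this integral \emph{diverges} for every $r>0$: near $s=2\theta$ the integrand behaves like $2\theta\expp{-2\theta r}/(s-2\theta)$; equivalently, as $t\to\infty$ the integrand $\gamma(t)^2 r\expp{-\gamma(t)r}$ tends to the positive constant $4\theta^2 r\expp{-2\theta r}$. No cancellation against the grafted-tree term is available, since both contributions are positive measures; ``conditional convergence'' cannot rescue this, and in particular the piece $\int_{2\theta}^\infty 2\theta(s-2\theta)^{-1}\expp{-sr}\,\rd s$ equals $+\infty$, not $2\theta\,\Gamma(0,2\theta r)$ (the incomplete Gamma function has lower limit $2\theta r>0$, not $0$). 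The missing ingredient is that only the portion of the spine \emph{above the MRCA $-A$ of the extant population} may be counted: mutations below $-A$ are carried by the whole population (the continuum analogue of stopping the discrete SFS at $k=n-1$), and without discarding them $\Lambda$ assigns infinite mass to every set charged by the law of $Z_0$. Concretely one must insert the indicator $\ind_{\{A>t\}}$, observe that $\{A>t\}$ depends only on the trees grafted below $-t$ and is therefore independent of $\cz_0(\cti(-t))$, and use $\P(A>t)=1-(1-\expp{-2\beta\theta t})^2\sim 2\expp{-2\beta\theta t}$ from \eqref{eq:LoiASachantZ0}. In the variable $u=1-\expp{-2\beta\theta t}$ this replaces your kernel $1/\bigl(u^2(1-u)\bigr)$ by the integrable $(1+u)/u^2$, and it is precisely this weighting that produces the terms $\expp{-2\theta r}$ and $2\theta r\,\Gamma(0,2\theta r)$ in \eqref{eq:DensityLambdat}.
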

In this formula, as will be apparent from the proof, the first term represents the contribution from the mutations not located on the spine, whereas the second and third term come from the spine mutations. It is worth noticing that
\[
  f(r)\sim_{r\to 0+} \frac{\mu}{\beta \theta r}
  \quad\text{and}\quad
f(r)\sim_{r\to +\infty} \frac{2\mu}{\beta} 
\expp{-2\theta r}.
\]
In other words, for small \(r\), that is, for mutations shared
by a small fraction of the extant population at time 0, the only significant 
contribution comes from non-spine mutations. By contrast, 
for large \(r\), corresponding to mutations shared by a large number of the 
extant population, only spine mutations are significant. 

\begin{rem}[Discrete SFS measure]
    \label{rem:discrete-sfs-measure}
The discrete site frequency point measure is the following measure, defined on $(0,1]$, with $\xi_{k}^{(n)}$ as in \eqref{eq:DefSFS}:
\[
    \Phi_\mathrm{d}^{(n)} = \sum_{1\le k\le n-1} \xi_{k}^{(n)} \delta_{\frac{k}{n}}.
\]
Note that in this case, we are normalizing space, thus considering the fraction $k/n$ of the population carrying a given mutation rather than the number of individuals. As $n$ goes to $\infty$, conditionally on $\cti$, this measure converges (when tested against continuous function with compact support in $(0, 1]$)
a.s.\ to the normalized site frequency point measure:
\[
    \Phi_\mathrm{d}^{(\infty)} = \sum_{i\in I_M} \delta_{\cz_0(\cti(m_i))/\cz_0(\cti)}.
\]
Let $f$ be a nonnegative continuous function with support in $[a,b]\subset (0, 1]$. 
We have:
\begin{align*}
\E[\langle \Phi_\mathrm{d}^{(n)}, f \rangle \,|\, Z_0]
&= \inv{n} \sum_{k\in [an, bn]} n \E[ \xi_k^{(n)}\,|\, Z_0]\,  f\left( \frac{k}{n}\right)\\
&= \frac{\mu Z_0}{\beta}\, \inv{n} \sum_{k\in [an, bn]} \left(1+ g_1\left( \theta Z_0, \frac{k}{n}\right) \right)  \frac{n}{k} \ f\left( \frac{k}{n}\right) + O( n^{-1/2}).
\end{align*}
Using the continuity of $g_1$, we deduce that a.s.:
\[
\lim_{n \rightarrow \infty} \E[\langle \Phi_\mathrm{d}^{(n)}, f \rangle \,|\, Z_0]= \frac{\mu Z_0}{\beta}\,\int_0^1 \frac{1+ g_1\left( \theta Z_0, u\right)}{u} \, f(u)\, \rd u .
\]
Similarly, we have using~\eqref{eq:def-bf:g}:
\[
\lim_{n \rightarrow \infty} \E[\langle \Phi_\mathrm{d}^{(n)}, f \rangle ]
= \int_0^1 \frac{\mathbf{g}(u)}{u} \, f(u)\, \rd u .
\]
It is thus natural to conjecture that:
\[
\E[ \Phi_\mathrm{d}^{(\infty)} (\rd r) \,|\, Z_0]=\frac{\mu Z_0}{\beta}\,  \ind_{(0, 1]}(r) \, \frac{1+ g_1(\theta Z_0, r)}{r} \, \rd r 
\quad\text{and}\quad
\E[ \Phi_\mathrm{d}^{(\infty)} (\rd r)]= \frac{\mu Z_0}{\beta}\, \ind_{(0, 1]}(r) \, \frac{\mathbf{g}(r)}{r} \, \rd r .
\]
The proof of this conjecture would require some uniform integrability conditions.

Unfortunately, due to the lack of a branching structure for the normalized process $\cz_t/\cz_0(\cti)$, it is not straightforward to obtain an expression for the mean measure of $\Phi_\mathrm{d}^{(\infty)}$ as in Theorem~\ref{the:meanSFS}.
\end{rem}

\begin{proof}
  Recall $-A$ denotes the  TMRCA and $N_t$ the number of ancestors at time $-t$
  of  the extant  population living  at time  $0$. The size at time 0 of the subpopulation descending from each 
  of these ancestors is distributed as an independent CB process $(Y_t,\ t\ge 0)$ conditioned to survive until 
  time $t$. Recall also the construction of the genealogical tree $\cti$ from
  Section~\ref{sec:def-cti}. We shall identify  $s\in \R$ with the
  element on the infinite spine $\R$ of $\cti$ at height $s$; in particular $\cti(s)$ represents the total offspring sub-tree of $s\in \R\subset \cti$. 

  Using  the branching
  property, we get for $h$ a non-negative measurable function defined on
  $[0, +\infty )$ with $h(0)=0$:
  \begin{align*}
    \Lambda(h)
    &= \E\left[\sum_{i\in I_M} h(\cz_0(\cti(m_i))) \right]\\
    &= \E\left[\int_\cti \cm(\rd x) \, h (\cz_0(\cti(x))) \right]\\
    &=  \mu  \E\left[ \int_0^\infty \rd t\, N_t\,  \rN[h(Y_t)\, |\, \zeta >
      t] \right]+
      \mu  \E\left[ \int_0^{A} \rd t\,  
 \E[h(\cz_0(\cti(-t)))\, |\, A>t] \right].
  \end{align*}
 In this formula, the first term represents the contributions at time 0 of 
 the \(N_t\) individuals at time \(t\) before the present that are ancestral 
 to the population at time 0, whereas the second term is the contribution of 
 the infinite spine, that is, the descendants at time 0 of populations 
 immigrating between time \(-t\) and 0. 

 The density  distribution $q_t$  of $Y_t$  under the  excursion measure
 $\rN$,     see    \cite[p.~63]{Li2011},     is    given     by: 
\[
  \rN[\rd Y_t=r]=q_t(r)\, \rd r= \frac{4\theta^2\expp{-2\beta\theta t}}
 {(1-\expp{-2\beta\theta t})^2} 
 \exp\left(-\frac{2\theta r}{1-\expp{-2\beta\theta t}}\right) \, \rd r.
\]
Using also the  expectation  of  $N_t$ 
 in~\eqref{eq:ENt} and     $\rN[\zeta>t]=c(t)$, we obtain for  the
 first  term that:
\begin{align*}
   \E\left[ \int_0^\infty \rd t\, N_t\,  \rN[h(Y_t)\, |\, \zeta >
  t] \right]
  &= \inv{\theta}  \int_{(0, \infty )} h(r)\, \rd r\,\int_0^\infty \rd t\,
    q_t(r) \\
  & = \int_{(0, \infty )} h(r) \,  \frac{\expp{-2 \theta
    r}}{\beta\theta r} \,  \rd r.
\end{align*}

We now consider the second term. Recall the stationary tree $\cti$ is obtained by grafting on the spine $\R$ at height $h_i$ a tree $\ct_i$, where $\cn (\rd h, \rd \bt)=\sum_{i\in I_T} \delta_{(h_i, \ct_i)} (\rd h, \rd \bt)$ is a Poisson point measure with intensity $2 \beta\, \rd h \, \N[\rd \ct]$. Recall that $\zeta_i$ denotes the height of $\ct_i$. Let $i_0\in I$ be the unique index such that $h_{i_0}=-A= \inf\{h_i\, \colon\, h_i+ \zeta_i>0 \text{ for }i\in I \}$. Notice that $h_{i_0}$ is a stopping time with respect to the filtration $(\cf_h, h\in \R)$, where $\cf_h$ is generated by $\cn((-\infty, h]\times \T)$. 
By the strong Markov property and homogeneity of the Poisson point measure, we get that the random measure $\sum_{i\in I_T} \delta_{(h_i- h_{i_0}, \ct_i)} (\rd h, \rd \bt)\ind_{\{h_i> h_{i_0}\}}$ is a Poisson point measure on $\R_+ \times \T$ with intensity $2 \beta\, \ind_{\{h>0\}}\, \rd h \, \N[\rd \ct]$.  We then deduce, thanks to the definition of the Kesten tree  given in Section~\ref{sec:def-cti},  that:
\begin{equation}\label{eq:MomentsZCl}
\E[h(\cz_0(\cti(-t)))\, |\, A>t] =\E[h(Z^\mathrm{Kesten}_t)].
\end{equation}
Thanks to~\eqref{eq:EZK}, we get that $
  \E[h(\cz_0(\cti(-t)))] =  \expp{2\beta
    \theta t}\rN[ Y_t \, h(Y_t)]$. 
Using this formula for the first equality and 
for the second  Equation~\eqref{eq:LoiASachantZ0} and that $Z_0$ is distributed as the
sum of two independent exponential random variables with parameter $2\theta$, we get:
\begin{align*}
   \E\left[ \int_0^{A} \rd t\,  
  \E[h(\cz_0(\cti(-t)))\, |\, A>t] \right]
  &= \int_0^\infty  \rd t\,  \P(A>t) \expp{2\beta
    \theta t}\rN[ Y_t \, h(Y_t)]\\
  &=  \int_0^\infty \rd t  \int_0^\infty \rd r \,
    (1-(1-\expp{-2\beta\theta t})^2) \expp{2\beta\theta t}  
   r q_t(r) h(r) \\
  & = \frac{2\theta}{\beta}  \int_{(0, \infty )} h(r) \rd r \,
 r \int_0^1 \frac{1+u}{u^2} 
 \expp{-2\theta r/u}  \rd u\\
 & = \frac{1}{\beta}\int_0^\infty \left( \expp{-2\theta r} + 
 2\theta r\int_0^1  \expp{-2\theta r /u} \frac{\rd u}{u} 
 \right)h(r) \rd r \\
 & = \frac{1}{\beta}\int_0^\infty \left(\expp{-2\theta r}
 +2\theta r\,\Gamma(0, 2\theta r)\right)h(r) \rd r.
\end{align*}
\end{proof}

\subsection{The clonal subpopulation size}\label{sec:PopClonale}
In this section, we consider the size $\zc$  of the 
clonal population at time 0, meaning the individuals sharing the same type as the 
MRCA $\root$ of the extant population at time 0:
\[
 \zc=\cz_0( \ctcl(\root)),
\]
with the clonal sub-tree defined by~\eqref{eq:clonal-tree}. 
Of course, we 
have $\zc\leq Z_0$ a.s.
Computing the expectation of $\zc^n$ (conditionally on $Z_0$) amounts to sampling $n$ independent vertices, say $\cx_1,  \ldots ,\cx_n$, in the stationary tree according to the measure $\cz_0$, such that the subtree  
$\ct_n'$
spanned by  the $n$ leaves  $\cx_1, \ldots , \cx_n$ and  the MRCA,  say $\root'$, has no mutation. By definition of the mutation point measure
$\cm$, conditionally on $\ct'_n$, this happens with probability $\exp{(- 
\mu  L(\cx_1,\ldots,  \cx_n))}$, where $ L(\cx_1,\ldots,  \cx_n)$ is  the  length of  the tree  $\ct_n'$. 
Thus,  we get   that for all $n\ge 1$:
\begin{equation}\label{eq:momentsZcl}
 \E[\zc^n\, |\, Z_0]
 =\E\left[\left.\int_{(\cti)^n} \expp{-\mu  L(\cx_1,  \ldots,
 \cx_n)} \, \prod_{i=1}^n \cz_0(\rd \cx_i) \, \right|\, Z_0\right].   
\end{equation}

  We recall the tree $\ct_n$ spanned by the $n$ leaves
$\cx_1, \ldots, \cx_n$ is rooted at  the MRCA of $\cx_1, \ldots, \cx_n$,
and is  thus a sub-tree of  $\ct'_n$ obtained by removing  the (possibly
empty) branch from $\root'$ to just before the MRCA of $\cx_1, \ldots, \cx_n$.

Following Section~\ref{sec:sample-T}, we consider the tree $\fT'_n$
defined as $\fT_n$ but for the last step where we cut the  semi-infinite
branch not at its
last (going downwards) branching point $\root_n$, which is at length
$\max_{1\leq 
  k\leq n} \zeta_k$, but
at $\root$ which is at length
$\max_{0\leq 
  k\leq n+1} \zeta_k$.
Notice that the distribution of $\max_{0\leq 
  k\leq n+1} \zeta_k$ does not depend on $n$,
see~\eqref{eq:LoiASachantZ0}, which explains why we do not stress the
dependence of $\root$ on $n$. See Fig.~\ref{fig:tikz} for an instance of
$\fT'_n$. 
Similarly to Lemma~\ref{lem:sampling}, using 
\cite[Lemma~4.1]{abrahamExactSimulationGenealogical2021}, we get the
following result.

\begin{lem}[Representation of the genealogical tree of $n$ individuals
  and the MRCA of the extant population]
 \label{lem:sampling2}
 For $n\in \N^*$, the rooted tree $\ct'_n$ spanned by
 \(\root'_n,\cx_1, \ldots, \cx_n\) is distributed as the rooted tree
 \(\fT' _n\).
\end{lem}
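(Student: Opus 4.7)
The plan is to derive this as a direct extension of Lemma~\ref{lem:sampling}. The rooted tree $\ct'_n$ is obtained from $\ct_n$ by prolonging the root upward along the semi-infinite spine, from the sample MRCA $\root_n$ to the MRCA $\root'_n$ of the entire extant population; likewise, $\fT'_n$ is obtained from $\fT_n$ by lifting the cutoff of the spine from $\max_{1\le k\le n}\zeta_k$ to $\max_{0\le k\le n+1}\zeta_k$. In view of Lemma~\ref{lem:sampling}, it therefore suffices to show that the two extra ancestral depths $\zeta_0$ and $\zeta_{n+1}$, appended to the ancestral point measure $\ca_n$ of \eqref{eq:def-PP-A}, correctly encode the height of $\root'_n$ on the spine, in the appropriate joint distribution.

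First, I would verify the marginal. By the superposition property recalled just after \eqref{eq:law-z*}, since $\sum_{k=0}^{n+1} I_k = Z_0$, we have conditionally on $\mathcal{L}_n$:
\[
  \max_{0\le k\le n+1}\zeta_k \stackrel{d}{=} \zeta^*\!\!\left(\sum_{k=0}^{n+1}I_k\right) = \zeta^*(Z_0),
\]
which by \eqref{eq:LoiASachantZ0} is precisely the conditional distribution of the TMRCA $A$ given $Z_0$. Hence the cutoff height of $\fT'_n$ has the correct conditional marginal, and in particular does not depend on $n$ in distribution, consistently with the notation $\root$ (without subscript) used in the statement.

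For the full joint identification, I would invoke \cite[Lemma~4.1]{abrahamExactSimulationGenealogical2021} on the enlarged index set $\{0,1,\ldots,n+1\}$ in place of $\{1,\ldots,n\}$. The endpoints $-\Eg$ and $\Ed$ then play the role of two phantom sampling points whose ancestral branches attach to the infinite spine at depths $\zeta_0\sim\zeta^*(I_0)$ and $\zeta_{n+1}\sim\zeta^*(I_{n+1})$; the Poissonian structure of the immigration measure $\cn(\rd h,\rd Y)$ guarantees that ancestral branches attached in disjoint spatial sub-intervals of $[-\Eg,\Ed]$ are conditionally independent given $\mathcal{L}_n$. The main subtlety is to check that the two phantom depths $\zeta_0, \zeta_{n+1}$ faithfully capture the ancestral structure near $\root'_n$, i.e.\ that the leftmost and rightmost ancestral branches of the extant population are correctly accounted for by the boundary intervals $I_0$ and $I_{n+1}$; this is exactly what the cited lemma provides after enlarging the sample. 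Once this is granted, $\ct'_n$ and $\fT'_n$ are the same deterministic function of the enlarged ancestral data, and the distributional identity follows.
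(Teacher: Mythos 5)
Your argument is correct and follows essentially the same route as the paper, which likewise reduces the statement to \cite[Lemma~4.1]{abrahamExactSimulationGenealogical2021} applied with the endpoints $-\Eg$ and $\Ed$ taken into account, and records via the superposition property and \eqref{eq:LoiASachantZ0} that $\max_{0\le k\le n+1}\zeta_k$ is distributed as the TMRCA $A$ conditionally on $Z_0$. Your explicit marginal check is a welcome elaboration, but no idea beyond the paper's one-line justification is involved.
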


We thus deduce from \eqref{eq:momentsZcl} that:
\begin{equation}
   \label{eq:moment-Zcl}
  \E[\zc^n\, |\, Z_0]
  =\E\left[\left. Z_0^n \expp{-\mu  L_n }\, \right|\, Z_0\right]
\end{equation}
with $L_n$  the total
length of the tree $\fT'_n$. By construction the total length of
$\fT'_n$ is given by the length of the segments attached to the random
points $X_1, \ldots, X_{n-1}$ and the 
semi-infinite spine cut at $\max_{0\leq 
  k\leq n+1} \zeta_k$ which is attached to $X_0=0$, that is:
\[
  L_n= \max_{0\leq k\leq n+1} \zeta_{k} + \Lambda_{n-1}
  \quad\text{and}\quad
  \Lambda_{n-1} =\sum_{k=1}^{n}
   \zeta_{k}.   
 \]
 (Notice that in  the above formula $\zeta_\ell=0$ for  the index
 $\ell\in \llbracket 1, n \rrbracket$
 such  that $X_{(\ell)}=X_0$.)

\begin{rem}[On the asymptotic of $L_n$]
   \label{rem:asympt-Ln}
 Let us  mention that  the asymptotics  of
   $\Lambda_{n-1}$ have been computed
   in \cite[Section~5]{abrahamExactSimulationGenealogical2021}, and  we have the following convergence in distribution:
   \[
\Lambda_{n-1} - \frac{Z_0}{\beta} \log\left(\frac{n}{2 \theta
    Z_0}\right) \xrightarrow[n\rightarrow \infty ]{\text{(d)}} \cll,
\]
where      the      distribution       of      $\cll$      is      given
in~\cite[Lemma~5.4]{biTotalLengthGenealogical2016} (with  $\cll$ denoted
by $W_0$ therein).   In fact the construction of the  $\zeta_k$'s can be
done   in    such   way   that    this   convergence   is    a.s.,   see
\cite[Theorem~5.1]{abrahamExactSimulationGenealogical2021}. 
However, we did  not investigate  the   joint  law of $\cll$ and the TMRCA of the whole population at time 0 given by 
$\max_{0\leq   k\leq   n+1}  \zeta_{k}$ (which we recall does not depend on $n$).  
\end{rem}

\medskip

Recall that $\beta(a,b)=\Gamma(a)\Gamma(b)/\Gamma(a+b)$ for $a,b\in \R_+^*$. 
We  set:
\[
 R= \frac{\zc}{Z_0}
\quad\text{and}\quad
\alpha=\frac{\mu}{2\beta \theta}\cdot
\]

\begin{theo}
 \label{th:moment-clone}
 For $n\in \N^*$, we
 have:
\begin{equation}
 \label{eq:E[ZR]-prop}
 \E[\zc^{n-1} R]
= \frac{\alpha}{(1+\alpha)^n}
 \left[ \beta\left(n, \frac{2+\alpha}{1+\alpha}\right) + \beta\left(n, \frac{\alpha}{1+\alpha}\right)
 - \frac{2}{n}\right]\, \, \E\left[Z_0^{n-1}\right]. 
\end{equation}
The formula for $\E[\zc^{n}]$
is explicit and given by~\eqref{eq:trop-long} below. 
In particular, we have:
\begin{equation}
 \label{eq:Z-R}
 \E[\zc]=\E[R Z_0]= \frac{6}{(\alpha+1)(\alpha+2)(\alpha+3)}\, \E[Z_0]
 \quad\text{and}\quad
 \E[R]=\frac{2}{(\alpha+1)(\alpha+2)} 
\end{equation}
and thus:
\[
 \cov(R, Z_0)= - \frac{2\alpha}{(\alpha+1)(\alpha+2)(\alpha+3)} \, 
\E[Z_0].
 \]
We also have:
\[
 \frac{\E[\zc^n]}{ \E[Z_0^n]}
 \sim_{n\rightarrow \infty } \frac{2\alpha}{2+\alpha}
\Gamma\left(\frac{\alpha}{1+\alpha}\right) \frac{1
}{(1+\alpha)^n \, n^{\alpha/(1+\alpha)}} \cdot
\]
\end{theo}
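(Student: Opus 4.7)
The plan is to start from~\eqref{eq:moment-Zcl} and compute $\E[e^{-\mu L_n}\mid Z_0]$ by decoupling the maximum $A$ from the sum $\Lambda_{n-1}$. Using $e^{-\mu A}=\int_0^\infty \mu e^{-\mu t}\prod_{k=0}^{n+1}\ind_{\{\zeta_k\le t\}}\,dt$ and the conditional independence of the $\zeta_k$'s given the spacings $(I_k)_{k=0}^{n+1}$ (with $\zeta_k\sim\zeta^*(I_k)$), I get
\[
\E[e^{-\mu L_n}\mid (I_k)]=\int_0^\infty \mu e^{-\mu t}\,e^{-(I_0+I_{n+1})c(t)}\prod_{k=1}^n \Phi(I_k,t)\,dt,
\]
with $\Phi(I,t)=\E[\ind_{\{\zeta^*(I)\le t\}}e^{-\mu\zeta^*(I)}]$. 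The change of variable $u=c(t)$ rewrites this as $\int_0^\infty \phi'(u)\,e^{-(I_0+I_{n+1})u}\prod_{k=1}^n \Phi(I_k,u)\,du$ with $\phi(u)=(u/(u+2\theta))^\alpha$ and $\alpha=\mu/(2\beta\theta)$.

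The crucial structural observation is that averaging over $\Eg/Z_0$ (equivalently, over the rank of the immortal lineage $X_0=0$ among the sampled points) makes the $n+1$ non-zero spacings distributed as $Z_0\cdot\mathrm{Dirichlet}(1,\ldots,1)$ on the $(n+1)$-simplex, with $I_0,I_{n+1}$ the two extreme spacings and the others interior. Since $\Phi(0,u)=1$ (because $\zeta^*(0)=0$), the product over $k\in\{1,\ldots,n\}$ collapses to a product over the $n-1$ interior spacings, with no residual dependence on the position of the immortal lineage. Using $Z_0\sim\mathrm{Gamma}(2,2\theta)$ independent of the Dirichlet vector and the change of variables $G_k=Z_0 V_k$, the joint density of $(G_0,\ldots,G_n)$ becomes $4\theta^2\,n!\,e^{-2\theta\sum g_i}/(\sum g_i)^{n-1}$.

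For $\E[\zc^{n-1}R]=\E[Z_0^{n-1}e^{-\mu L_n}]$, the factor $(\sum g_i)^{n-1}=Z_0^{n-1}$ precisely cancels the awkward denominator, and the integrand factorizes over iid $g_k$-integrations against the weight $e^{-2\theta g}$, giving
\[
\E[\zc^{n-1}R]=4\theta^2\,n!\int_0^\infty \phi'(u)\,a(u)^2\,b(u)^{n-1}\,du,
\]
where $a(u)=(u+2\theta)^{-1}$ comes from each of the two boundary integrals and $b(u)=\int_u^\infty\phi(v)(v+2\theta)^{-2}\,dv$ from each of the $n-1$ interior ones. Setting $y=u/(u+2\theta)$ gives $b(u)=(1-y^{1+\alpha})/(2\theta(1+\alpha))$ and $\phi'(u)\,du=\alpha y^{\alpha-1}\,dy$; the substitution $w=1-y^{1+\alpha}$ then reduces the integral to the combination of Beta integrals $B(n,1-1/(1+\alpha))+B(n,1+1/(1+\alpha))-2B(n,1)$, and using $B(n,1)=1/n$ together with $1\pm 1/(1+\alpha)=(2+\alpha)/(1+\alpha),\ \alpha/(1+\alpha)$ reproduces~\eqref{eq:E[ZR]-prop}.

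The formula for $\E[\zc^n]$ follows the same scheme, but now $Z_0^n/(\sum g_i)^{n-1}=\sum g_i=g_0+g_n+\sum_{k=1}^{n-1} g_k$ survives in the integrand; distributing this sum over the indices splits the contribution into a boundary part involving $a'(u)=(u+2\theta)^{-2}$ and an interior part involving $b'(u)=2\int_u^\infty\phi(v)(v+2\theta)^{-3}\,dv$, which after the same $y$- and $w$-substitutions produces the longer expression~\eqref{eq:trop-long}. The moments~\eqref{eq:Z-R} and $\cov(R,Z_0)$ are then the $n=1,2$ specialisations. For the asymptotic, the identity $B(n,a)\sim\Gamma(a)\,n^{-a}$ shows that $B(n,\alpha/(1+\alpha))$ dominates (since $\alpha/(1+\alpha)<(2+\alpha)/(1+\alpha)$), delivering the claimed decay $n^{-\alpha/(1+\alpha)}(1+\alpha)^{-n}$. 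The main obstacle I anticipate is the bookkeeping connecting the maximum (which couples all $n+2$ spacings) with the sum (which involves only the $n$ interior $\zeta_k$'s) while respecting the boundary/interior asymmetry---both of which are defused by the averaging-over-rank observation.
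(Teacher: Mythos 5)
Your proposal is correct, and while it rests on the same probabilistic backbone as the paper's proof --- the representation of $L_n$ via Lemma~\ref{lem:sampling2}, the law~\eqref{eq:law-z*} of $\zeta^*(\delta)$, the fact that the $n+1$ non-zero spacings are $Z_0$ times a flat Dirichlet vector (your ``averaging over the rank of $X_0$'' is exactly the paper's observation that the $\Delta_k$ are distributed as the spacings of $n$ iid uniforms, since $\Eg/Z_0$ is uniform and independent of $Z_0$), and the cancellation of $Z_0^{n-1}$ against the Dirichlet density (which the paper packages as the tilted measure $\rd \Q_n=Z_0^{n-1}\rd\P/\E[Z_0^{n-1}]$, under which the spacings become iid exponentials) --- it diverges at the combinatorial core. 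The paper writes $\expp{-\mu\zeta'_k}=V_k^\alpha$ with $V_k$ uniform and evaluates $\E[(\min_k V_k^\alpha)\prod_j V_j^\alpha]$ by conditioning on which $V_k$ attains the minimum, which forces the case analysis behind $A_n,B_n,A_n^{(0)},\dots,B_n^{(2)}$. You instead linearize the maximum via $\expp{-\mu A}=\int_0^\infty\mu\expp{-\mu t}\prod_k\ind_{\{\zeta_k\le t\}}\,\rd t$, after which the conditional independence of the $\zeta_k$ factorizes everything into the single one-dimensional integral $4\theta^2\,n!\int_0^\infty\phi'(u)\,a(u)^2\,b(u)^{n-1}\,\rd u$; I checked that the substitutions $y=u/(u+2\theta)$ and $w=y^{1+\alpha}$ turn this into $\frac{\alpha}{(1+\alpha)^n}\bigl[\beta(n,\frac{2+\alpha}{1+\alpha})+\beta(n,\frac{\alpha}{1+\alpha})-\frac{2}{n}\bigr]\E[Z_0^{n-1}]$ as required, and that the analogous computation for $\E[\zc^n]$ (extra factor $\sum g_i$, split by exchangeability into two boundary contributions $(u+2\theta)^{-2}$ and $n-1$ interior contributions $2\int_u^\infty\phi(v)(v+2\theta)^{-3}\rd v$) does yield the leading constant $\frac{2\alpha}{2+\alpha}\Gamma(\frac{\alpha}{1+\alpha})$ from the $j=0$ monomial of $(1-y)^2\int_y^1 s^\alpha(1-s)\,\rd s$. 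Your method trades the paper's lengthy bookkeeping for a compact integral from which both the exact formula and the asymptotics fall out directly; the only caveats are minor --- your interior weight is not literally $b'(u)$, and your route produces an explicit expression for $\E[\zc^n]$ that is equivalent to, but not arranged as, \eqref{eq:trop-long} --- neither of which affects the validity of the argument.
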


Interestingly, Theorem \ref{th:moment-clone} shows that $R$ and $Z_0$ are negatively correlated as 
$\mathrm{Corr}(R, Z_0)= -1 + 3/(\alpha+3)$
: larger populations tend to have smaller clonal subpopulations, and this effect becomes stronger as the mutation rate increases.

\subsection{Proof of Theorem~\ref{th:moment-clone}}

We shall use many times the following formula for $b>0$:
\[
 \beta(1,b)=\inv{b}
 \quad\text{and}\quad
 \beta(a,b)\sim_{a\rightarrow \infty }
 \Gamma(b)\, a^{-b},
\]
 and, as $\Gamma(b+1)=b\Gamma(b)$, for $a>1$:
\[
 \beta(a-1, b+1)=\frac{b}{a-1} \, \beta(a,b) .
\]
We shall also use that for $U$ uniform on $[0, 1]$, $a\geq 0$, $k> 0$ and $b=a/(1+\alpha)$:
\begin{equation}
 \label{eq:E[U]}
\cu(k,a):= \E\left[ U^{\alpha+a} \, \left(1-
 U^{1+\alpha} \right)^{k-1} \right]=
 \inv{1+\alpha}\, \beta(k, 1+b),
\end{equation}
 that for $a>0$ and $k>1$:
\begin{equation}
 \label{eq:E[U]-1}
\cu(k-1,a)=
 \frac{a}{k-1} \, \inv{(1+\alpha)^2}\, \beta(k, b),
\end{equation}
and that for $a>1+\alpha$ and $k>2$:
\begin{equation}
 \label{eq:E[U]-2}
\cu(k-2,a)=
 \frac{a(a-1-\alpha)}{(k-1)(k-2)} \, \inv{(1+\alpha)^3}\, \beta(k, b-1).
\end{equation}

 Let $n\in \N^*$. 
As $Z_0$ is the sum of two independent exponential random variables with
mean $1/2\theta$, we get:
\[
 \E[Z_0^{n-1}]=\frac{n!}{(2\theta)^{n-1}}\cdot
\]
Using~\eqref{eq:moment-Zcl}, we first consider the quantity:
 \[
 \E[\zc^{n-1} R]=\E\left[Z_0^{n-1} \expp{- \mu  L_n}\right].
\]

\medskip

We shall now go back to the definition of the random variables
$(\zeta_k, 0\leq  k\leq  n+1)$ from Item (ii) of
Section~\ref{sec:sample-T} to give a nice representation of the
distribution of $(\max_{0\leq k\leq n+1} \zeta_{k} , \Lambda_{n-1})$
under the probability measure $\rd \Q_n= Z_0^{n-1} \rd \P/
\E[Z_0^{n-1}]$. Thus,  since $2\theta Z_0$ has  the  $\Gamma(2,1)$
distribution, we obtain that under $\Q_n$ it has the $\Gamma(n+1,1)$ distribution.

\medskip
Recall that the random variables $X_{(0)}=-\Eg<X_{(1)}< \ldots <X_{(n)}< X_{(n+1)}=\Ed$
 are the order statistics of $\{-\Eg,\Ed,X_0,\dots,X_{n-1}\}$ where $X_0=0$ and \(X_k = Z_0 U_k - \Eg\)
for \(k\in \N^*\) and where \((U_k,\ k\in \N^*)\) are independent 
random variables, uniformly distributed on \([0,1]\), independent of \(\Eg,\Ed\). Recall $Z_0=\Eg+\Ed$. 
 
In           particular          the           random          variables
$(\Delta_k=2\theta(X_{(k)}   -X_{(k-1)}),   1\leq    k\leq   n+1)$   are
distributed as $(2\theta Z_0 (  U_{(k)} - U_{(k-1)}), 1\leq k\leq n+1)$,
where $U_{(0)}=0  < U_{(1)}  < \ldots  < U_{(n)}  < U_{(n+1)}=1$ are the
order  statistics of  $\{0, 1,  U_1,  \ldots, U_n\}$.  Using properties of  the Poisson
process,   we   deduce  that   under   $\Q_n$,   the  random   variables
$(\Delta_k, 1\leq k\leq n+1)$ are distributed as  $(E_k, 1\leq k\leq
n+1)$, where $\mathbf{E}=(E_k, k\in \N^*)$ are independent exponential random
variables with mean 1.

\medskip

Set $(\zeta'_k, 1\leq k\leq n+1)$ with:
 \[
 \zeta'_k= \inv{2\beta \theta}\,
 \log\left(\frac{E'_k+E_k}{E'_k}\right),
 \]
 where  the random  variables  $(E'_k, k\in  \N^*)$  are distributed  as
 $\mathbf{E}$ and independent of  $\mathbf{E}$.  Now recall there exists
 a   (random)  index   $i\in  \llbracket   1,n  \rrbracket$   such  that
 $\zeta_i=0$,   so  intuitively   among   the   $n+2$  random   variables
 $\zeta_0, \ldots, \zeta_{n+1}$, there are  only $n+1$ nontrivial ones.
 More            precisely,            we            get            that
 $\left(\max_{0\leq  k\leq  n+1}  \zeta_{k} ,  \Lambda_{n-1}\right)$  is
 under $\Q_n$ distributed as:
\[
  \left(\max_{1\leq k\leq n+1} \zeta'_{k} ,\,  \sum_{k=2}^{n}
   \zeta'_{k}   
 \right).
 \]

 The random variables $(V_k, k\in \N)$, with:
 \[
 V_k  =\frac{ E'_k}{E'_k+ E_k},
 \]
 are independent
 and uniformly distributed on \([0,1]\).  We deduce that:
 \begin{equation}
 \label{eq:E[ZR]}
\E[\zc^{n-1} R]=\E\left[Z_0^{n-1}\right]\, \E\left[\left(\min_{1\leq k\leq n+1}
 V_k^\alpha\right) \, \prod_{j=2}^{n} V_j^\alpha\right].
 \end{equation}
Elementary computations give that:
\[
  \E\left[\left(\min_{1\leq k\leq n+1}
      V_k^\alpha\right) \, \prod_{j=2}^{n} V_j^\alpha\right]
  = 2A_n+ (n-1) B_n,
 \]
 with, thanks to~\eqref{eq:E[U]}:
 \begin{align*}
 A_n
 =\E\left[V_1^\alpha \, \prod_{k=2}^{n+1} \ind_{\{V_1< V_k\}}\,
 \prod_{j=2}^{n} V_j^\alpha \right]
& = \frac{1}{(1+\alpha)^{n-1}}\, \E\left[ U^\alpha (1-U) \, \left(1-
 U^{1+\alpha} \right)^{n-1} \right]\\
& = \frac{1}{(1+\alpha)^{n-1}}\, \Big(\cu(n, 0) - \cu(n, 1) \Big)\\
&=\inv{(1+\alpha)^n} \left[ \inv{n} - \beta\left(n,
 \frac{2+\alpha}{1+\alpha} \right)\right],
 \end{align*}
and for $n\geq 2$, thanks to~\eqref{eq:E[U]-1}:
 \begin{align*}
 B_n
 &= \E\left[V_2^{2\alpha} \prod_{k=1,3, \ldots, n+1} 
 \ind_{\{V_2< V_k\}}\, \prod_{j=3}^{n-1} V_j^\alpha\right]\\
 &= \inv{ (1+\alpha)^{n-2}} \, \E\left[ U^{2\alpha} (1-U)^2 \, \left(1-
 U^{1+\alpha} \right)^{n-2} \right]\\
 &= \inv{ (1+\alpha)^{n-2}} \, \Big(\cu(n-1,\alpha) -
 2\cu(n-1,1+\alpha) + \cu(n-1, 2+\alpha) \Big)\\
 &= \inv{n-1}\, \inv{(1+\alpha)^{n}} \left[ 
 \alpha \beta \left(n, \frac{\alpha}{1+\alpha}\right)
 -\frac{2(1+\alpha)}{n} 
+ (2+\alpha) \beta \left(n, \frac{2+\alpha}{1+\alpha}\right)
 \right].
 \end{align*}
 We deduce that:
 \[
 2A_n + (n-1) B_n=\frac{\alpha}{(1+\alpha)^n}
 \left[ \beta\left(n, \frac{2+\alpha}{1+\alpha}\right) + \beta\left(n, \frac{\alpha}{1+\alpha}\right)
 - \frac{2}{n}\right]. 
\]
We thus deduce~\eqref{eq:E[ZR]-prop} from~\eqref{eq:E[ZR]}. Taking
$n=1$, gives the value of $\E[R]$ in~\eqref{eq:Z-R}.

\medskip

 We now compute $\E[\zc^n]$. We have:
 \begin{align*}
 \E[\zc^n]
 &= \inv{2\theta} \E\left[Z_0^{n-1} (2\theta Z_0)\, \expp{- \mu
L_n}\right] \\ 
 &=\inv{2\theta} \, \E[Z_0^{n-1}] \, 
 \E\left[(E_1+ \ldots + E_{n+1})\, \min_{1\leq k\leq n+1}
 \left(\frac{E'_k}{E_k+E'_k}\right)^\alpha \, \, 
 \prod_{j=2}^{n}\left(\frac{E'_j}{E_j+E'_j}\right)^\alpha\right]\\
 &= \frac{1}{n+1} \, \E[Z_0^{n}] \, \Big(2 C_n+(n-1)D_n\Big),
 \end{align*}
 with:
 \[
 C_n
 =\E\left[E_1\, \min_{1\leq k\leq n+1}
 \left(\frac{E'_k}{E_k+E'_k}\right)^\alpha \, 
 \prod_{j=2}^{n}\left(\frac{E'_j}{E_j+E'_j}\right)^\alpha\right]
 \]
 and for $n\geq 2$:
 \[
   D_n
   =\E\left[E_2\, \min_{1\leq k\leq n+1}
 \left(\frac{E'_k}{E_k+E'_k}\right)^\alpha \, 
 \prod_{j=2}^{n}\left(\frac{E'_j}{E_j+E'_j}\right)^\alpha\right].
 \]
 We have:
 \begin{align*}
 C_n
 &=\E\left[(E_1+ E'_1) \, \left(1- V_1\right)\, 
\left( \min_{1\leq k\leq n+1} 
V_k^\alpha\right) \,
 \prod_{j=2}^{n} V_k ^\alpha\right]\\
 &=2\E\left[ \left(1- V_1\right)\, 
\left( \min_{1\leq k\leq n+1} 
V_k^\alpha\right) \,
 \prod_{j=2}^{n} V_k ^\alpha\right]\\
 &=2A^{(0)}_n+ A^{(01)}_n+ (n-1)B^{(0)}_n,
 \end{align*}
 where we used that $E_1+ E'_1$ is independent of $V_1$ for the second
 equality, and with:
\begin{align*}
 A^{(0)}_n
 &=\E\left[ (1-V_1) V_1^\alpha \, \prod_{k=2}^{n+1} \ind_{\{V_1< V_k\}} \,
 \prod_{j=2}^{n}V_j^\alpha\right]\\
&=\inv{(1+\alpha)^{n-1}}\, \E\left[U^\alpha (1-U)^2\, (1-
 U^{\alpha+1})^{n-1}\right]\\
 &=\inv{(1+\alpha)^{n-1}}\,\Big(
 \cu(n,0) - 2 \cu(n, 1) + \cu(n,2)\Big)\\
& = \inv{(1+\alpha)^n} \,\left[\inv{n} -
 2\beta\left(n,\frac{2+\alpha}{1+\alpha} \right) +
 \beta\left(n, \frac{3+\alpha}{1+\alpha}\right) \right] ,
 \end{align*} 
and (using elementary computations for the last equality):
 \begin{align*}
 A^{(01)}_n
 =2\E\left[ (1-V_1)\, V_{n+1}^\alpha \, \prod_{k=1}^{n} \ind_{\{V_{n+1}< V_k\}} \,
 \prod_{j=2}^{n}V_j^\alpha\right]
= A^{(0)}_n,
 \end{align*} 
and for $n\geq 2$:
 \begin{align*}
 B^{(0)}_n
 &= 2\E\left[ (1-V_1)\, V_2^{2\alpha} \, \prod_{k=1,3,  \ldots, n+1}
 \ind_{\{V_2< V_k\}} \, 
 \prod_{j=3}^{n}V_j^\alpha\right]\\
 &=\inv{(1+\alpha)^{n-2}} \, \E\left[U^{2\alpha} (1-U)^3 (1-
 U^{1+\alpha})^{n-2}\right]\\
 &=\inv{(1+\alpha)^{n-2}} \,
 \Big( \cu(n-1, \alpha) - 3\cu(n-1, 1+\alpha) + 3 \cu(n-1, 2+\alpha)-
 \cu(n-1, 3+\alpha)\Big)\\
 &=\inv{n-1}\, \inv{(1+\alpha)^{n}} \,
 \left[ \alpha \beta\left(n, \frac{\alpha}{1+\alpha} \right)
 - 3\frac{(1+\alpha)}{n} \right. \\
 & \left. \qquad\qquad\qquad\qquad\qquad\qquad + 3(2+\alpha) \beta\left(n, \frac{2+\alpha}{1+\alpha} \right)
- (3+\alpha) \beta\left(n, \frac{3+\alpha}{1+\alpha} \right)\right].
 \end{align*}
Similarly, we also have for $n\geq 2$:
 \begin{align*}
 D_n
 &=\E\left[(E_2+ E'_2) \, \left(1-V_2\right)\, 
 \left( \min_{1\leq k\leq n+1} 
 V_k^\alpha \right)\,
 \prod_{j=2}^{n} V_j ^\alpha\right]\\
 &=2 \E\left[ (1-V_2)\, \left(\min_{1\leq k\leq n+1}
 V_k\right)^\alpha \,
 \prod_{j=2}^{n}V_j^\alpha\right]\\
 &=2\left(2A^{(1)}_n+ B^{(11)}_n+(n-2)B^{(1)}_n\right),
 \end{align*}
 with:
\[
 A^{(1)}_n
=\E\left[ V_1^{\alpha}(1-V_2) \, \prod_{k=2}^{n+1} \ind_{\{V_1< V_k\}} \,
 \prod_{j=2}^{n}V_j^\alpha\right]= A_n - A_n^{(2)},
\]
and:
\begin{align*}
 A^{(2)}_n
 &=\E\left[ V_1^{\alpha}V_2 \, \prod_{k=2}^{n+1} \ind_{\{V_1< V_k\}} \,
 \prod_{j=2}^{n}V_j^\alpha\right]\\
 &= \inv{(2+\alpha)\, (1+\alpha)^{n-2}}\, \E\left[U^\alpha(1-U) (1-
 U^{2+\alpha})(1-
 U^{1+\alpha})^{n-2} \right]\\
 &=\inv{(2+\alpha)\, (1+\alpha)^{n-2}}\,
 \Big(
 \cu(n-1,0) - \cu(n-1, 2+\alpha) - \cu(n-1, 1) + \cu(n-1, 3+\alpha)
 \Big)\\
 &=\inv{n-1}\, \inv{(2+\alpha)\, (1+\alpha)^{n}}\,
 \left[
 (1+\alpha)
 - (2+\alpha) \beta\left(n,\frac{2+\alpha}{1+\alpha}\right) \right. \\
 & \left. \qquad\qquad\qquad\qquad\qquad\qquad - \beta\left(n,\frac{1}{1+\alpha}\right)
+(3+\alpha) \beta\left(n,\frac{3+\alpha}{1+\alpha}\right)\right],
\end{align*}
and with (using elementary computations for the last equality):
\[
 B^{(11)}_n
 = \E\left[ (1-V_2)\, V_2^{2\alpha} \, \prod_{k=1, 3, \ldots, n+1}
 \ind_{\{V_2< V_k\}} \, 
 \prod_{j=3}^{n}V_j^\alpha\right]
= B^{(0)}_n,
\]
 and lastly with, for $n\geq 3$:
\begin{align*}
 B^{(1)}_n
 &= \E\left[ (1-V_2)\, V_2^\alpha\, V_3^{2\alpha} \, \prod_{k=1,2, 4, \ldots, n+1}
 \ind_{\{V_3< V_k\}} \, 
 \prod_{j=4}^{n}V_j^\alpha\right]\\
 &= B_n - B^{(2)}_n,
 \end{align*} 
 and, using~\eqref{eq:E[U]-2}:
 \begin{align*}
 B^{(2)}_n
 &=\inv{(2+\alpha)\, (1+\alpha)^{n-3}}\,
 \E\left[U^{2\alpha} (1-U)^2
 (1-U^{2+\alpha}) (1-
 U^{1+\alpha})^{n-3}\right] \\
 &=\inv{(2+\alpha)\, (1+\alpha)^{n-3}}\,
 \Big(
 \cu(n-2,\alpha) - \cu(n-2, 2+2\alpha) \Big.
 \\
 &\hspace{2cm} \Big. - 2\cu(n-2, 1+\alpha) +2\cu(n-2 ,
 3+2\alpha) + \cu(n-2, 2+\alpha) - \cu(n-2, 4+2\alpha)
 \Big)\\
 &=\inv{(n-1)(n-2)}\, \inv{(2+\alpha)\, (1+\alpha)^{n}}\,
 \Big[
 (n-1) \alpha(1+\alpha) \beta\left(n-1,\frac{\alpha}{1+\alpha}\right)
 -\frac{(2+2\alpha)(1+\alpha)}{n} 
 \Big.
 \\
 &\hspace{4cm} 
 - 2(1+\alpha)^2
 +2 (3+2\alpha)(2+\alpha) \beta\left(n,\frac{2+\alpha}{1+\alpha}\right)\\
 &\hspace{4cm} \Big.
 + (2+\alpha)\beta\left(n,\frac{1}{1+\alpha}\right)
 - (4+2\alpha)(3+\alpha)\beta\left(n,\frac{3+\alpha}{1+\alpha}\right)
 \Big].
 \end{align*}

In conclusion, we obtain that:
\begin{multline}
 \label{eq:trop-long}
 \E[\zc^n]\\ 
\begin{aligned}
 &= \frac{1}{n+1} \, \E[Z_0^{n}] \, \Big(2 C_n+(n-1)D_n\Big)\\
 &=\frac{2}{n+1} \, \E[Z_0^{n}] \, \Big(3 A^{(0)}_n +2(n-1)(A_n
 -A_n^{(2)} +B_n^{(0)} ) + (n-1)(n-2) ( B_n 
 -B^{(2)}_n) \Big).
\end{aligned} 
\end{multline}
Taking $n=1$ in the above formula, we get:
\[
\E[\zc]= 3 A_1^{(0)}\, \E[Z_0] 
 = \frac{3}{(1+\alpha)} \,\left[1 - 
 2\frac{1+\alpha}{2+\alpha} + 
 \frac{1+\alpha}{3+\alpha}\right]\, \E[Z_0] ,
\]
 which gives the first part of~\eqref{eq:Z-R}. 
 We now give the leading term in~\eqref{eq:trop-long}. We have:
 \begin{align*}
 (1+\alpha)^{n}\, A_n^{(0)}
 &=O(n^{-1}),\\
 (1+\alpha)^{n}\, A_n
 &=O(n^{-1} ),\\
 (1+\alpha)^{n}\, A_n^{(2)}
 &= O(n^{-1} ),\\
 (1+\alpha)^{n}\, B_n^{(0)}
 &= o(n^{-1} ),\\
 (1+\alpha)^{n}\, B_n
 &=n^{-1 -\alpha/(1+\alpha)} \alpha \Gamma\left(\frac{\alpha}{1+\alpha}\right)+ O(n^{-2}
 ),\\
 (1+\alpha)^{n}\, B_n^{(2)}
 &= n^{-1 -\alpha/(1+\alpha)} \, \frac{1+\alpha}{2+\alpha} \, \alpha 
 \Gamma\left(\frac{\alpha}{1+\alpha}\right)+ O(n^{-2} ).
 \end{align*}
 We deduce that:
 \[
 \E[\zc^n]= \frac{2\alpha}{2+\alpha} \, 
 \Gamma\left(\frac{\alpha}{1+\alpha}\right) \E[Z_0^{n}] \,
 \inv{(1+\alpha)^n} \, \left( \inv{n^{\alpha/(1+\alpha)}}\, +
 O(n^{-1})\right).
 \]
 This ends the proof of Theorem~\ref{th:moment-clone}.

\end{document}